\definecolor{halfgray}{gray}{0.55} 
\definecolor{webgreen}{rgb}{0,0.5,0}
\definecolor{webbrown}{rgb}{.6,0,0} \hypersetup{%
\newtheorem{theorem}{Theorem}[section]
\newtheorem{lemma}[theorem]{Lemma}
\newtheorem{corollary}[theorem]{Corollary}
\newtheorem{proposition}[theorem]{Proposition}
\newtheorem{add}[theorem]{Addendum}
\theoremstyle{definition}
\newtheorem{question}[theorem]{Question}
\newtheorem{definition}[theorem]{Definition}
\newtheorem{remark}[theorem]{Remark}
\newcommand{\field}[1]{\mathbb{#1}}
\newcommand{\R}{\field{R}}
\newcommand{\N}{\field{N}}
\newcommand{\Z}{\field{Z}}
\newcommand{\eM}{\EuScript M}
\newcommand{\eS}{\EuScript S}
\newcommand{\cB}{\mathcal B}
\newcommand{\cD}{\mathcal{D}}
\newcommand{\cL}{\mathcal{L}}
\newcommand{\scP}{\mathscr{P}}
\newcommand{\ie}{{\it i.e., }}
\newcommand{\eg}{{\it e.g., }}
\numberwithin{equation}{section}
\renewcommand{\phi}{\varphi}
\renewcommand{\epsilon}{\varepsilon}
\newcommand{\eps}{\varepsilon}
\renewcommand{\|}{\,\Vert\,}
\begin{document}
\baselineskip=14pt

\title{Abelian Livshits theorems and geometric applications}
\author {Andrey Gogolev and Federico Rodriguez Hertz}\thanks{The authors were partially supported by NSF grants DMS-1823150 and DMS-1500947 \& DMS-1900778, respectively}
\dedicatory{dedicated to the memory of Anatole Katok, our mentor and friend}

 \address{Department of Mathematics, The Ohio State University,  Columbus, OH 43210, USA}
\email{gogolyev.1@osu.edu}

\address{Department of Mathematics, The Pennsylvania State University, 
University Park, PA 16802, USA}
\email{hertz@math.psu.edu}

\begin{abstract} 
  \begin{sloppypar}
We introduce a notion of abelian cohomology in the context of smooth flows. This is an equivalence relation which is weaker than the standard cohomology equivalence relation for flows. We develop Livshits theory for abelian cohomology over transitive Anosov flows. In particular, we prove an abelian Livshits theorem for homologically full Anosov flows. Then we apply this theorem to strengthen marked length spectrum rigidity for negatively curved surfaces. We also present an application to rigidity of contact Anosov flows. Some new results on homologically full Anosov flows are also given.
  \end{sloppypar}
\end{abstract}
\maketitle



\section{Introduction}

Cohomological equations play a crucial role in dynamical systems theory. In the setting when the dynamical system is a smooth flow $X^t\colon M\to M$ generated by the vector field $X$ the equation
\begin{equation*}
\label{eq1}
\phi=L_Xu
\end{equation*}
is called {\it cohomological equation}. Here $u\colon M\to\R$ is an unknown function an $L_X$ is the Lie derivative in the direction of $X$. If the above equation has a solution then the function $\phi\colon M\to\R$ is called an {\it $X^t$-coboundary.}\footnote{We use the term $X^t$-cohomologous rather than just ``cohomologous'' to avoid confusion when we consider cohomology of the manifold.}
Accordingly, two continuous functions $\phi$ and $\psi$ are said to be {\it $X^t$-cohomologous} if $\phi-\psi$ is a coboundary. 
We recommend A. Katok's book for a general introduction of cohomology for dynamical systems~\cite{K}.

The basic question is to decide whether the space of $X^t$-coboundaries is $\cB_X$ is closed in appropriate topology (H\"older, smooth, etc.). If this is the case then $\cB_X$ is given by the intersection of kernels of all $X^t$-invariant bounded linear functionals. Even better is to explicitly describe a family of functionals which would yield $\cB_X$. If $X^t\colon M\to M$ is a transitive Anosov flow on a compact manifold $M$ then this problem was solved by A. Livshits in the classical paper~\cite{Liv}. He proved that in this case the space $\cB_X$, as a subspace of the space of H\"older continuous functions is characterized by countably many functionals which are given by integration over periodic orbits of $X^t$.

Much later, A. Katok and A. Kononenko introduced a new type of functionals called periodic cycle functionals to handle the case of partially hyperbolic dynamical systems. They proved that the space of coboundaries is closed for partially hyperbolic diffeomorphisms and flows which satisfy certain local accessibility condition on its stable and unstable foliation~\cite{KK}. The local accessibility was later relaxed to a more general accessibility condition by Wilkinson~\cite{W}.

In this paper we revisit the idea of Katok-Kononenko and define a version of periodic cycle functionals. We prove several versions of an {\it abelian Livshits Theorem} for transitive Anosov flows, where abelian cohomology is a certain weaker equivalence relation than cohomology.\footnote{To avoid confusion we stress that the term ``abelian" does not refer to the group of values. All cocycles in this paper are $\R$-valued.}  Roughly speaking, this relation is given by the standard cohomology on the universal abelian cover. Then we give an application to marked length spectrum rigidity. More specifically, we improve the Croke-Otal marked length spectrum rigidity theorem (\cite{Cr, Ot}) in the following way. If, for a given pair of negatively curved compact surfaces, the marked lengths spectra on homologically trivial orbits coincide then, in fact, the full marked length spectra match and, by the Croke-Otal~\cite{Cr, Ot} rigidity theorem, the surfaces are isometric. We also apply our machinery to rigidity problem of contact Anosov flows. Namely, under a technical assumption, if two contact Anosov flows are smoothly orbit equivalent then, in fact, the first flow is conjugate to a very special reparametrization of the second flow. Both of these applications also strongly rely on R. Sharp's work on homologically full Anosov flows~\cite{Sh}.

\subsection*{Organization}
In the next section we recall the definition of homologically full Anosov flows and recall some results of Sharp. We provide a new characterization of homologically full Anosov flows in terms of transitivity on the universal abelian cover. We also prove that contact Anosov flows are homologically full.
In Section 3 we state and prove two abelian Livshits theorems: one for general Anosov flows in terms of periodic cycle funcitonals and one for homologically full Anosov flows in terms of homologically trivial periodic data. We also give an ``integer Livshits theorem." In Section 4 we establish several properties of reparametrized flows which will be needed for applications. In particular, we study how equilibrium states and Sharp's minimizers for homologically full Anosov flows behave under reparametrization. In Section 5 we apply an abelian Livshits Theorem to the conjugacy problem of homologically full Anosov flows similarly to the way that the classical Livshits Theorem applies to the conjugacy problem of transitive Anosov flows. Then, in Section~6, we use the preceding results to weaken the assumption in the Croke-Otal rigidity theorem to matching of homologically trivial spectra only. In Section~7 we  also give an application to the conjugacy problem for contact Anosov flows. Finally, in Section 8, we present an instructive example showing that our results from Sections 6 and 7 are optimal in certain ways. 
We also pose two open questions, one at the end of Section~2 and one at the end of Section~3.

\subsection*{Notation} We will denote by $[\alpha]$ the homology class of a periodic orbit or, more generally, a loop $\alpha$. Similarly by $[\omega]$ we will denote the cohomology class of a closed 1-form $\omega$.




\section{Homologically full Anosov flows}

Here we recall some notions and results due to R. Sharp on homologically full Anosov flows~\cite{Sh}. We also establish some results about such flows which will be needed for our abelian Livshits theory and applications, but may have some independent interest. We begin with a general definition based on the idea that ``homologically full" should mean that points can move asymptotically in any direction in homology of the manifold.

Let $M$ be a compact smooth manifold and let $X^t\colon M\to M$ be a smooth flow. Denote by $\eM(X)$ the space of $X^t$-invariant probability measures. Recall that $\eM(X)\neq \varnothing$ by Krylov-Bogolyubov theorem~\cite{KB}. Following Schwartzman~\cite{Sch} define the asymptotic cycle $\rho\colon\eM(X)\to H_1(M,\R)\simeq Hom(H^1(M,\R),\R)$ by $\rho_\mu([\omega])=\int_M\omega(X)d\mu$. Recall that this is indeed well-defined, \ie $\rho_\mu([\omega])$ is independent of the 1-form representative of the cohomology class $[\omega]$. The define {\it the Schwartzman simplex} as the set $\eS(X)=\{\rho_\mu: \mu\in \eM(X)\}\subset H_1(M,\R)$ which is a compact convex set.
\begin{definition} A smooth flow $X^t\colon M\to M$ is called {\it homologically full} if the origin is contained in the interior of $\eS(X)$.
\end{definition}
\begin{definition} Denote $\EuScript V(X)\subset H_1(M,\R)$ the linear span of $\eS(X)$. Then $X^t\colon M\to M$ is called {\it homologically ample} if the origin is contained in the interior of $\eS(X)$ as a subset of $\EuScript V(X)$.
\end{definition}
For example, the geodesic flow on a flat 2-torus is homologically ample, but not homologically full. On the other hand, if $X^t$ is a transitive homologically ample Anosov flow then it is homologically full. Indeed this follows from the fact that homology classes of periodic orbits of $X^t$ span $H_1(M,\Z)$ which is due to Parry and Pollicott~\cite{PP}. (Plante proved the same for volume preserving flows earlier, using a much simpler argument~\cite{Pl}.)

\begin{remark} Let $N$ be a compact Riemannian manifold. If $N\neq \mathbb T^2$ then the bundle map induces the isomorphism $H_1(T^1N,\R)\simeq H_1(N,\R)$. Let $X^t\colon T^1N\to T^1N$ be the geodesic flow. Then  the Schwartzman simplex $\eS(X)\subset H_1(T^1N,\R)$ coincides with the unit ball in $H_1(N,\R)$ with respect to so called {\it stable norm} which is a very popular object of study in geometry~\cite[Th\'eor\`eme 1.3.9]{M}.
\end{remark}

Now we follow Sharp and recall the definition of the $\beta$-functional for Anoov flows~\cite{Sh}.
Define $\beta\colon H^1(M,\R)\to\R$ using the formula 
$$
\beta([\theta])=\sup_{\mu\in \eM(X)}\left\{h_\mu(X)+\int_M\theta(X)d\mu \right\}
$$
The definition is independent of a particular 1-form representative in the cohomology class. Indeed, if $d\alpha$ is an exact form then $\int_Md\alpha(X)d\mu=\int_ML_X\alpha d\mu=0$ and hence $\beta(\theta+d\alpha)=\beta(\theta)$. Sharp proved that $\beta$ is convex.

\begin{theorem}[\cite{Sh}]\label{thm_sharp}
Let  $X^t\colon M\to M$ be a transitive Anosov flow. Then the following statements are equivalent.
\begin{itemize}
\item[(i)] every homology class in $H_1(M,\Z)$ is represented by a periodic orbit of $X^t$;
\item[(ii)] there exists a fully supported measure $\mu\in\eM$ such that $\int_M\omega(X)d\mu=0$ for all closed 1-forms $\omega$;
\item[(iii)] the functional $\beta\colon H^1(M,\R)\to\R$ is bounded below and there exists unique $\xi_X\in H^1(M,\R)$ for which the minimum is attained;
\item[(iv)] the convex hull of the set $\{[\gamma]: \gamma \,\,\,\mbox{is a periodic orbit}\}\subset H_1(M,\R)$ contains the origin in its interior.
\end{itemize}
\end{theorem}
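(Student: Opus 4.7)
My plan is to prove the four conditions equivalent by combining Anosov dynamics with Sharp's thermodynamic formalism for $\beta$. The equivalence (i) $\Leftrightarrow$ (iv) is the most classical: (i) $\Rightarrow$ (iv) is immediate from a basis argument, taking periodic orbits realizing $\pm e_i$ for a $\Z$-basis $\{e_i\}$ of $H_1(M,\Z)$; while for (iv) $\Rightarrow$ (i), any $v \in H_1(M,\Z)$ can be written as $\sum n_i [\gamma_i]$ with $n_i \in \Z_{\geq 0}$ by interiority, and Anosov shadowing closes a concatenation of the orbits $\gamma_i$ (with multiplicities $n_i$) into a genuine periodic orbit of class $v$.

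For (iv) $\Rightarrow$ (iii) $\Rightarrow$ (ii), I invoke thermodynamic formalism. By Sigmund density of periodic orbit measures and Parry-Pollicott, $\eS(X) = \overline{\mathrm{conv}}\{[\gamma]/\ell(\gamma)\}$, so (iv) is equivalent to $0 \in \mathrm{int}\,\eS(X)$. Interiority yields that in every direction $[\eta] \in H^1(M,\R) \setminus \{0\}$ one has $\sup_\nu \int_M \eta(X) d\nu > 0$, forcing the coercivity $\beta([\theta] + t[\eta]) \to +\infty$ as $t \to +\infty$. Combined with convexity of $\beta$ and real-analyticity of pressure, $\beta$ attains a minimum; strict convexity (from the pressure-Hessian-equals-asymptotic-variance identity together with Livshits' theorem, which ensures $(\theta_1 - \theta_2)(X)$ is not a coboundary for distinct classes $[\theta_1] \neq [\theta_2]$) gives uniqueness. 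Then (iii) $\Rightarrow$ (ii) is read off from the equilibrium state $\mu_{\xi_X}$ of the Hölder potential $\xi_X(X)$: it is fully supported (automatic for equilibrium states over transitive Anosov flows) and the first-order condition $\nabla \beta(\xi_X) = \rho_{\mu_{\xi_X}} = 0$ gives the vanishing of all cohomology pairings.

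Finally, (ii) $\Rightarrow$ (iv) I would argue by contradiction. If $0 \in \partial \eS(X)$, a supporting hyperplane provides a nonzero $[\omega] \in H^1(M,\R)$ with $[\omega] \cdot [\gamma] = \int_\gamma \omega \geq 0$ for every periodic orbit $\gamma$. By a non-negative-cohomology result in the spirit of Livshits-Bowen, $\omega(X)$ is $X^t$-cohomologous to a non-negative continuous function $h$. Since $\int_M h \, d\mu = \int_M \omega(X) d\mu = 0$, $h \geq 0$, and $\mu$ is fully supported, one concludes $h \equiv 0$, so $\omega(X)$ is itself a coboundary. Livshits' theorem then gives $\int_\gamma \omega = 0$ for all $\gamma$, and since periodic orbit classes span $H_1(M,\R)$ by Parry-Pollicott, this forces $[\omega] = 0$, a contradiction.

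The main obstacle is the thermodynamic input to the chain (iv) $\Rightarrow$ (iii) $\Rightarrow$ (ii): correctly identifying $\nabla \beta([\theta]) = \rho_{\mu_\theta}$ and establishing strict convexity of $\beta$ both depend on the differentiability of pressure for Hölder potentials and on Livshits' coboundary criterion, which together constitute the technical heart of the proof; the non-negative-cohomology lemma used in (ii) $\Rightarrow$ (iv) is a secondary but nontrivial point that also deserves care.
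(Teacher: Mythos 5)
This theorem is not proved in the paper at all: it is quoted verbatim from Sharp~\cite{Sh}, so there is no internal proof to compare against, and your attempt has to be judged as a reconstruction of Sharp's argument. The overall architecture you propose --- convex hull of periodic classes $\leftrightarrow$ interiority of the origin in the Schwartzman set, coercivity plus strict convexity of $\beta$ for (iii), and reading off (ii) from the first-order condition at the minimizer --- is the right one and matches the thermodynamic route Sharp actually takes.

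There is, however, a genuine gap in your (iv) $\Rightarrow$ (i). Interiority of $0$ in $\mathrm{conv}\{[\gamma]\}\subset H_1(M,\R)$ only tells you that the classes $[\gamma]$ positively span $H_1(M,\R)$; it does \emph{not} give that every $v\in H_1(M,\Z)$ is a non-negative integer combination $\sum n_i[\gamma_i]$. Over $\Z$ there can be index obstructions (the semigroup generated by $\{2,-2\}\subset\Z$ has full cone but misses the odd integers), and $H_1(M,\Z)$ may have torsion, which is entirely invisible in $H_1(M,\R)$ where condition (iv) lives. To close this you must invoke the Parry--Pollicott theorem that periodic orbit classes generate $H_1(M,\Z)$ \emph{as a group} (including torsion) --- the paper cites exactly this fact for exactly this purpose --- and then combine group generation with full positive span to upgrade to semigroup generation before the shadowing/concatenation step. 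You cite Parry--Pollicott only later, for the real span in (ii) $\Rightarrow$ (iv), so as written the step fails. Two smaller soft spots: strict convexity of $\beta$ requires ruling out $\eta(X)$ cohomologous to a (possibly nonzero) \emph{constant}, not merely to $0$ --- the nonzero-constant case is excluded by (iv), not by Livshits alone; and the non-negative Livshits theorem you use for (ii) $\Rightarrow$ (iv) is a nontrivial external input (Lopes--Thieullen, Pollicott--Sharp) whose flow version with continuous transfer function needs an explicit citation, though the way you deploy it (full support of $\mu$ forces $h\equiv 0$, hence $[\omega]$ annihilates all periodic classes) is correct.
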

We note right away that in the context of Anosov flows (iv) is equivalent to our definition of homologically full because measures supported on periodic orbits are dense in $\eM(X)$~\cite{Sig}. We call the cohomology class $\xi_X$ from (iii) {\it Sharp's minimizer}. Further, Sharp established an asymptotic formula for the number of periodic orbits in a fixed homology class. When $\xi_X=0$ the flow is ``balanced" in the sense that asymptotic density of periodic orbits in different homology classes is the same.


The following result provides yet one more characterization of homologically full Anosov flows. We need it for our abelian Livshits theory, but it might be of some independent interest as well. Let $\hat M$ be {\it the universal abelian cover} of $M$, that is, the cover which corresponds to the commutator subgroup $[\pi_1M,\pi_1M]$; its group of Deck transformation is given by $H_1(M,\Z)$. 

\begin{theorem}\label{thm_ab_transitive}
An Anosov flow $X^t\colon M\to M$ is homologically full if and only if its lift to the universal abelian cover is transitive.
\end{theorem}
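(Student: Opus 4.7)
The proof splits naturally into two implications, each relying on Theorem~\ref{thm_sharp}.

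For the backward direction, assume $\hat X^t$ is transitive on $\hat M$. Pick $\hat x \in \hat M$ with dense forward orbit. Given any class $g \in H_1(M,\Z)$ and any $\epsilon>0$, the deck translate $g\cdot \hat x$ has a neighborhood that is visited by the orbit, so there exists $T>0$ with $\hat X^T(\hat x)$ within $\epsilon$ of $g\cdot \hat x$. Projecting to $M$ yields an $\epsilon$-almost-closed orbit segment of $X^t$ whose associated pseudo-loop, by construction of the lift, represents the class $g$. The Anosov closing lemma produces a shadowing periodic orbit of $X^t$, whose homology class coincides with $g$ for $\epsilon$ small enough. Hence every integer homology class is realized by a periodic orbit of $X^t$, so Theorem~\ref{thm_sharp}(i) gives that $X^t$ is homologically full.

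For the forward direction, assume $X^t$ is homologically full. I first observe that $X^t$ is transitive on $M$: a non-transitive Anosov flow would decompose into finitely many basic sets and the Schwartzman simplex $\eS(X)$ would be the convex hull of the simplices $\eS(X|_{\Omega_i})$, each supporting measures whose asymptotic cycles lie in a proper subspace, contradicting $0\in\mathrm{int}\,\eS(X)$. So Theorem~\ref{thm_sharp}(i) applies and delivers a periodic orbit in every class of $H_1(M,\Z)$. To prove $\hat X^t$ is transitive it suffices to show that for any non-empty open $\hat U, \hat V \subset \hat M$ some orbit of $\hat X^t$ meets both. Fixing lifts and projecting down, this reduces to the following statement: for every pair of non-empty open sets $U, V \subset M$ and every $g \in H_1(M,\Z)$, there exists an orbit segment of $X^t$ from $U$ to $V$ whose relative homology, measured with respect to the fixed lifts, equals $g$.

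To construct such an orbit segment, take any connecting $X^t$-orbit from $U$ to $V$ produced by transitivity; its relative homology is some class $g_0$. Pick a periodic orbit $\gamma$ with $[\gamma] = g - g_0$ and apply the specification property of transitive Anosov flows to splice a single traversal of $\gamma$ into the middle of the connecting segment. The Anosov shadowing lemma then produces a genuine $X^t$-orbit whose homology equals $g_0 + (g-g_0) = g$ and which still begins in $U$ and ends in $V$. Lifted to $\hat M$, this orbit goes from $\hat U$ to $g \cdot \hat V$; since $g$ was arbitrary, transitivity of $\hat X^t$ follows.

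The main obstacle is the homology bookkeeping in the forward direction: one must verify that the $C^0$-small perturbations produced by specification and shadowing do not alter the integer homology class of the final orbit. This is handled by the standard form of the Anosov closing lemma, which gives shadowing orbits homologous to the input pseudo-orbits, combined with the fact that the homology class of a loop in a compact manifold is locally constant under $C^0$-small deformations.
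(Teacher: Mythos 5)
Your backward direction is essentially the paper's own argument (approximate the deck translate $g\cdot\hat x$ along the dense orbit, project, apply the Anosov closing lemma, read off the class from the lift of the closed-up orbit), and it is fine. The forward direction, however, has a genuine gap at the splicing step. Bowen's specification only constrains the produced orbit during the prescribed time windows; during the transition gaps of length $M(\eps)$ the orbit is unconstrained, so the two transition arcs (from the splice point $m$ out to the periodic orbit $\gamma$ and back) contribute an uncontrolled element of $H_1(M,\Z)$, and the realized relative class is $g_0+[\gamma]+(\text{junk})$ rather than $g_0+[\gamma]=g$. If instead you build an honest pseudo-orbit with small spatial jumps and then shadow it --- which is the mechanism that does preserve homology --- you need the periodic orbit $\gamma$ of class $g-g_0$ to pass within $\delta$ of the splice point $m$. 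Theorem~\ref{thm_sharp}(i) only provides \emph{some} periodic orbit in that class, located anywhere in $M$; producing one through a prescribed small chart is exactly the content (for the trivial class) of the paper's Lemma~\ref{lemma_dense}, whose proof arranges for the two orbits being concatenated to meet the \emph{same} Markov rectangle at nearby points, so that the jumps are contractible and Fried's surface controls the homology of the shadowing orbit. That lemma is the key ingredient you have skipped; once one has it (homologically trivial periodic orbits are dense, hence periodic orbits of $\hat X^t$ are dense in $\hat M$), Smale's chain-of-invariant-manifolds argument applied upstairs gives transitivity of $\hat X^t$ directly, making your downstairs bookkeeping unnecessary.

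A secondary issue: your derivation of transitivity of $X^t$ from homological fullness is not justified --- there is no reason each basic set of a non-transitive Anosov flow should only support measures whose asymptotic cycles lie in a proper subspace; a single attractor could a priori carry a full-dimensional Schwartzman set. The paper avoids this by working throughout with transitive Anosov flows (both Theorem~\ref{thm_sharp} and Lemma~\ref{lemma_dense} assume transitivity), and you should take transitivity of $X^t$ as part of the hypotheses rather than attempt to derive it.
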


\begin{remark} Transitivity on the universal abelian cover is equivalent to absence of wandering points. Indeed, transitivity on the abelian cover clearly implies that the lift has no wandering points. In the other direction, by Anosov closing lemma, full non-wandering set implies that periodic points are dense. Then transitivity follows by the standard Smale's argument as in the proof below.
\end{remark}

\begin{proof} We begin with the following lemma.
\begin{lemma} If $X^t\colon M\to M$ is a homologically full transitive Anosov flow then homologically trivial periodic orbits are dense in $M$.
\label{lemma_dense}
\end{lemma}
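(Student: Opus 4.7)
My plan is to construct an ergodic, fully supported $X^t$-invariant probability measure $\mu$ with $\rho(\mu)=0$, show the lifted flow on the universal abelian cover $\hat M$ is conservative with respect to the lifted measure, and then apply the Anosov closing lemma on $\hat M$.

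The measure $\mu$ is obtained from Sharp's theorem. Since ``homologically full'' coincides with item (iv) of Theorem~\ref{thm_sharp}, item (iii) supplies a unique $\xi_X\in H^1(M,\R)$ minimizing $\beta$. Picking a smooth closed 1-form $\theta$ representing $\xi_X$, the H\"{o}lder potential $\theta(X)$ admits a unique equilibrium state $\mu$ by Bowen--Ruelle thermodynamic formalism for transitive Anosov flows, and this $\mu$ is ergodic and fully supported. The envelope theorem, applied to $\beta([\xi_X]+t[\omega])$ at $t=0$, gives $D\beta(\xi_X)([\omega])=\int_M\omega(X)\,d\mu=\rho(\mu)([\omega])$; vanishing at the critical point $\xi_X$ yields $\rho(\mu)=0$.

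Now lift $\mu$ to the canonical $\sigma$-finite $\hat X^t$-invariant measure $\hat\mu$ on $\hat M$, which is $H_1(M,\Z)$-invariant and has full support. The flow $\hat X^t$ is the skew product of the ergodic system $(X^t,\mu)$ by the $H_1(M,\Z)$-valued winding cocycle, whose $\mu$-average equals $\rho(\mu)=0$. By Theorem~\ref{thm_sharp}(i) every integer homology class is realized by a closed orbit, so the essential value group of the winding cocycle is all of $H_1(M,\Z)$. An Atkinson--Schmidt-style conservativity theorem (together with exponential mixing of the Gibbs state $\mu$) then gives that $(\hat X^t,\hat\mu)$ is conservative: $\hat\mu$-almost every point of $\hat M$ is $\hat X^t$-recurrent and, by full support, the recurrent points are dense in $\hat M$.

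Finally, since $\pi\colon\hat M\to M$ is a local isometry, $\hat X^t$ inherits uniform hyperbolicity from $X^t$, so the Anosov closing lemma applies on $\hat M$: near every recurrent point there is a closed orbit of $\hat X^t$. These closed orbits are dense in $\hat M$ and project to homologically trivial closed orbits of $X^t$ dense in $M$. The main obstacle is the conservativity step: one-dimensional Atkinson (for $\R$-valued mean-zero cocycles) would suffice when $b_1(M)=1$, but in higher rank the extra input needed is precisely that the winding cocycle's essential value group is the full lattice $H_1(M,\Z)$, which is exactly Theorem~\ref{thm_sharp}(i). Everything else is routine.
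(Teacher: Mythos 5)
Your first step is sound: the equilibrium state $\mu_\xi$ of Sharp's minimizing potential is fully supported and satisfies $\rho(\mu_\xi)=0$; this is exactly (ii) of Theorem~\ref{thm_sharp} (see also~(\ref{eq_form_zero})). The gap is the conservativity step, and it is not a technical omission but a false claim. Atkinson's theorem holds only for rank one; for a mean-zero cocycle into $\Z^d$, the central limit theorem that exponential mixing of the Gibbs state provides gives return probabilities to a bounded window of order $t^{-d/2}$, which is \emph{summable} for $d\ge 3$, and a Borel--Cantelli argument then yields transience rather than recurrence. Concretely, by Rees's theorem (via the Hopf--Tsuji--Sullivan dichotomy) the geodesic flow of a compact hyperbolic surface of genus $g\ge 2$, lifted to the homology cover ($d=2g\ge 4$), is \emph{totally dissipative} with respect to the lifted Liouville measure, even though the base flow is homologically full and $\rho(\mu)=0$. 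The essential-value input does not repair this: a full essential value group governs ergodicity of the skew product \emph{given} recurrence, it does not produce recurrence, and a transient cocycle has trivial essential value group regardless of how large its range is. So $\hat\mu$-a.e.\ point of $\hat M$ need not be $\hat X^t$-recurrent, and your closing-lemma step on $\hat M$ has nothing to close.

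What is true is the purely topological statement that $\hat X^t$ is transitive (Theorem~\ref{thm_ab_transitive}), but the paper derives that \emph{from} Lemma~\ref{lemma_dense}, so invoking it here would be circular. Two correct routes are available. Either quote Sharp's equidistribution theorem directly: homologically trivial periodic orbits equidistribute to $\mu_\xi$, which is fully supported, hence they are dense --- this keeps your first paragraph and discards the cover entirely. Or use the paper's elementary shadowing argument: take an $\eps$-dense periodic orbit $\gamma_\eps$, use (i) of Theorem~\ref{thm_sharp} to find a periodic orbit $\gamma_\eps'$ in the opposite homology class, and concatenate their symbolic itineraries through a common Markov rectangle to obtain a single periodic orbit homologous to $\gamma_\eps+\gamma_\eps'$ (hence homologically trivial, via Fried's surface construction) which $\eps$-shadows $\gamma_\eps$ and is therefore $2\eps$-dense.
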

	By~\cite[Theorem 2]{Sh}, homologically trivial orbits equidistribute according to an equilibrium measure given by a potential $\xi(X)$, where $\xi$ is a certain closed 1-form. (When $X$ is a geodesic flow $\xi=0$ and homologically trivial orbits equidistribute to the measure of maximal entropy.) In particular, because equilibrium states are fully supported, it follows that the homologically trivial orbits are dense.  However, one can avoid using Sharp's machinery altogether and give a simpler proof by using shadowing. We briefly sketch this proof.
\begin{proof}[Proof of Lemma~\ref{lemma_dense}]
For any $\eps>0$ we will construct an $\eps$-dense periodic orbit. Begin with an $\eps$-dense geodesic $\gamma_\eps$. Then by (i) of Theorem~\ref{thm_sharp} there exist geodesics $\gamma_\eps'$ in the opposite homology class, \ie such that the sum $\gamma_\eps+\gamma_\eps'$ bounds a 2-cycle.

Consider a point $x\in M$ such that $\gamma'(t_\eps)\to x$, $\eps\to 0$. Pick a Markov partition for $X^t$ in such a way that $x$ is in the interior of a Markov rectangle $R$. Both $\gamma_\eps$ and $\gamma_\eps'$ intersect $R$ at two points $p$ and $q$, respectively, which are $\eps$-close to each other. By concatenating symbolic periods of $\gamma_\eps$ and $\gamma_\eps'$ we can find a periodic orbit $\eta_\eps$ which $\eps$-shadows $\gamma_\eps$ first and then $\eps$-shadows $\gamma_\eps'$. Orbit $\eta_\eps$ intersects $R$ very close to $[p,q]$ once and then intersect $R$ very close to $[q,p]$. (Of course there could be more points of intersection with $R$ corresponding to other points of intersection of $\gamma_\eps$ and $\gamma_\eps'$ with $R$.). Applying Fried's construction~\cite[pp. 300-301]{Fri} to $\gamma_\eps$, $\gamma_\eps'$ and $\eta$ yields a 2-dimensional immersed surface whose boundary consists of these periodic orbits;\footnote{Fried considers 3-dimensinal Anosov flows, but this particular construction works well in any dimension.} Moreover, $\eta_\eps$ is homologous to $\gamma_\eps+\gamma_\eps'$ and, hence, is homologically trivial. It remains to notice that $\gamma_\eps$ is contained in the $\eps$-neighborhood of $\eta$. Hence $\eta_\eps$ is $2\eps$-dense in $M$.
\end{proof}

First assume that $X^t$ is homologically full. Denote by $\hat M$ the universal abelian cover of $M$ and by $\hat X^t\colon\hat M\to\hat M$ the lift of the flow $X^t$. Note that homologically trivial periodic orbits in $M$ are precisely those periodic orbits which lift to periodic orbits of $\hat X^t$.  Hence, by the above lemma, periodic orbits of $\hat X^t$ are dense in $\hat M$. Now we can apply a standard argument of Smale~\cite[(7.5)]{smale}  to conclude that $\hat X^t\colon\hat M\to\hat M$ is indeed a transitive flow.
Namely, given open sets $U$ and $V$ one can wait until $U$ returns to itself and then connect this recurrent subset of $U$ to $V$ via a chain of stable and unstable manifolds of periodic point (which are dense by transitivity) and then apply the $\lambda$-lemma to show that some of the points eventually arrive in $V$\footnote{With some more care one could show topological mixing property.}.

It remains to check the converse implication. (This implication is not needed for Theorem~\ref{livsic}.) Assume that $\hat X^t$ is transitive, \ie $\{\hat X^t(\hat x):t\in\R\}$ is dense in $\hat M$ for some $\hat x\in\hat M$. 

The homology group $H_1(M,\Z)$ is identified with the group of Deck transformation of the cover $\hat M\to M$. Take any $\gamma\in H_1(M,\Z)$. Then for some $t_0>0$ the point $\hat X^{t_0}(\hat x)$ is very close to $\gamma(x)$ so that $\hat X^{t_0}(\hat x)$ and $\gamma(\hat x)$ belong to the same small local product structure chart. Denote by $x$ the image of $\hat x$ in $M$. Then $X^{t_0}(x)$ and $x$ also belong to the same small local product structure chart. Hence, by Anosov closing lemma, the orbit segment $[x, X^{t_0}(x)]$ can be shadowed by a periodic orbit of a point $y$, $X^{t_1}(y)=y$, $t_1\approx t_0$, which is very close to $x$. By the shadowing property the orbit of $y$ is homotopic to the orbit segment $[x, X^{t_0}(x)]$ concatenated with a short curve connecting $X^{t_0}(x)$ back to $x$. It follows that $\hat X^{t_1}(\hat y)=\gamma(\hat y)$, where $\hat y$ is a lift of $y$. That is, $\gamma$ is represented by a periodic orbit of $X^t$ and, hence, $X^t$ is homologically full.
\end{proof}

The following theorem provides a natural class of homologically full Anosov flows. It is interesting whether any contact flow can be shown to be homologically ample.

\begin{theorem}
\label{full}
Let $X^t\colon M\to M$ be a contact Anosov flow. Then $X^t$ is homologically full.
\end{theorem}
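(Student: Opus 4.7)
The plan is to verify condition (ii) of Theorem~\ref{thm_sharp} for the natural Liouville probability measure associated to the contact form. Write $\dim M=2n+1$ and let $\alpha$ be the contact form preserved by $X^t$, so $L_X\alpha=0$. The form $\nu:=\alpha\wedge(d\alpha)^n$ is then a nowhere-vanishing $X^t$-invariant volume, so the flow preserves a smooth volume and is in particular transitive. Consequently, the $X^t$-invariant function $\alpha(X)$ is constant, and after rescaling $\alpha$ I may assume $\alpha(X)\equiv 1$, which yields $i_Xd\alpha=0$ as well. Let $\mu$ be the normalization of $\nu$ to a probability measure; it is fully supported and $X^t$-invariant.

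It remains to check that $\int_M\omega(X)\,d\mu=0$ for every closed $1$-form $\omega$. The first ingredient is the pointwise identity
\[
\omega(X)\,\nu \;=\; \omega\wedge(d\alpha)^n,
\]
obtained by decomposing $\omega=\omega(X)\alpha+\omega'$ with $\omega'(X)=0$: the remainder $\omega'\wedge(d\alpha)^n$ is a $(2n+1)$-form whose interior product with $X$ vanishes, since both $\omega'$ and $d\alpha$ annihilate $X$, and any top-degree form annihilated by a nowhere-vanishing vector field is zero.

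The second ingredient is that $\omega\wedge(d\alpha)^n$ is exact whenever $d\omega=0$: a direct computation using $d\omega=0$ and $d(d\alpha)=0$ gives
\[
d\bigl(\omega\wedge\alpha\wedge(d\alpha)^{n-1}\bigr) \;=\; -\,\omega\wedge(d\alpha)^n.
\]
By Stokes' theorem $\int_M\omega\wedge(d\alpha)^n=0$, and combining with the pointwise identity yields $\int_M\omega(X)\,d\mu=0$. Condition (ii) of Theorem~\ref{thm_sharp} is therefore satisfied and, as noted after its statement, this is equivalent to $X^t$ being homologically full.

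I do not anticipate any serious obstacle: the entire argument reduces to the short Stokes-type computation above, packaged with Sharp's criterion. The only bookkeeping lies in the normalization turning $X$ into the Reeb field and the vanishing of $\omega'\wedge(d\alpha)^n$, both of which are standard manipulations in contact geometry.
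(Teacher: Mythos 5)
Your proposal is correct and follows essentially the same route as the paper: verify condition (ii) of Theorem~\ref{thm_sharp} for the invariant contact volume via the pointwise identity $\omega\wedge(d\alpha)^n=\omega(X)\,\alpha\wedge(d\alpha)^n$ and the Stokes computation $\omega\wedge(d\alpha)^n=-d\bigl(\omega\wedge\alpha\wedge(d\alpha)^{n-1}\bigr)$. The only (harmless) difference is cosmetic: you derive the Reeb normalization $\alpha(X)\equiv 1$, $\iota_Xd\alpha=0$ from transitivity, whereas the paper takes it as part of the definition of a contact Anosov flow, and you obtain the pointwise identity by decomposing $\omega$ rather than by contracting $\omega\wedge(d\alpha)^n=\psi\,\alpha\wedge(d\alpha)^n$ with $X$.
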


\begin{proof}
Let $\alpha$ be the positive contact form for $X^t$ and let $m=\alpha\wedge(d\alpha)^k$ be the invariant volume form. Recall that by (ii) of Theorem~\ref{thm_sharp}: {\it $X^t$ is homologically full if for every $[\omega]\in H^1(M,\R)$ }
$$
\int_M\omega(X)m=0
$$

So let $\omega$ be a closed 1-form. Note that $\omega\wedge (d\alpha)^k$ is a top-dimensional form and, hence, $\omega\wedge (d\alpha)^k=\psi \alpha\wedge(d\alpha)^k$. Contraction with $X$ yields $\omega(X)(d\alpha)^k=\psi\iota_X(\alpha\wedge(d\alpha^k))=\psi(d\alpha)^k$. Hence $\psi=\omega(X)$. Now we have
\begin{multline*}
\int_M\omega(X)m=\int_M\omega(X)\alpha\wedge (d\alpha)^k=\int_M\omega\wedge (d\alpha)^k=\int_M-d(\omega\wedge\alpha\wedge(d\alpha)^{k-1})=0
\end{multline*}
\end{proof}

\begin{remark} I was pointed out to us by G. Paternain that the above result is well-known, see \eg~\cite[Corollary 4.10]{Pl2}.
\end{remark}

Asaoka proved that any transitive codimension-1 Anosov flow is orbit equivalent to a volume preserving Anosov flow~\cite{A}. Also recall that Foulon and Hasselblatt developed contact surgery and created many examples of 3-dimensional contact Anosov flows~\cite{FH}.

\begin{question} Is every 3-dimensional homologically full Anosov flow orbit equivalent to a contact Anosov flow?
\end{question}


\section{Abelian cohomology for Anosov flows}\label{cohomologicalequationflows}

Let $X^t\colon M\to M$ is a transitive Anosov flow on a closed compact manifold $M$ of arbitrary dimension. A H\"older continuous function $\phi$ is called an {\it abelian coboundary} if there exists a smooth closed $1-$form $\omega$ and a H\"older continuous function $u$, which is continuously differentiable along $X$, such that 
\begin{equation}
\label{eq2}
\phi=\omega(X)+L_Xu
\end{equation}
 Here $\omega(X)$ stands for the contraction given by evaluation of $\omega$ on the generating vector field $X$. Accordingly, we say that two function $\phi$ and $\psi$ are {\it abelian cohomologous} if $\phi-\psi$ is an abelian coboundary.

\begin{remark}
\label{rmk_liv}
Notice that the decomposition~(\ref{eq2}) is highly non-unique because we can change $\omega$ by any exact 1-form. Indeed, given any smooth function $v\colon M\to\R$ we can write a different decomposition
$$
\phi=(\omega+dv)(X)+L_X(u-v)
$$
However one could make some canonical choice for example by asking $\omega$ to be harmonic with respect to a Riemannian metric. (Recall that given a fixed Riemannian metric there exists a unique harmonic representative in each cohomology class.)
\end{remark}
 
 We develop the counterpart of the standard Livshits theory~\cite{Liv} for abelian cohomology. Specifically, we prove two {\it abelian Livshits Theorems} for transitive Anosov flows:
\begin{itemize}
\item General Livshits Theorem~\ref{katokkononenkoabelianthm} which characterizes the space of abelian coboundaries as the intersection of kernels of periodic cycle functionals;
\item Livshits Theorem~\ref{livsic} for homologically full Anosov flows which characterizes abelian coboundaries via obstructions given by integration over homologically trivial periodic orbits;
\end{itemize}
 At the end of this section we also give a similar proposition for functions whose periodic orbits obstructions take values in a rank one abelian subgroup of $\R$ and pose an open question for the case of finite rank.

Now we explain the term ``abelian.''
Recall that the universal abelian cover $\hat M\to M$ is the cover which corresponds to the commutator subgroup $[\pi_1M,\pi_1M]$. 
Then the lift $\hat\omega$ of any closed 1-form $\omega$ on $M$ is exact on $\hat M$. Hence a lift $\hat\phi$ to $\hat M$ of an abelian coboundary $\phi\colon M\to\R$ is a true coboundary for the lifted Anosov flow $\hat X$ because
$$
\hat\phi=\hat\omega(\hat X)+L_{\hat X}\hat u=d\alpha(\hat X)+L_{\hat X}\hat u=L_{\hat X}(\alpha+\hat u)
$$

\subsection{Katok-Kononenko theory of periodic cycle functionals revisited}
Given an Anosov flow $X^t\colon M\to M$ a {\it $us$-adapted path} is a piecewise smooth path $\gamma\colon[0,1]\to M$ such that each of its legs lies entirely in a stable or an unstable leaf of $X^t$. Analogously, an {\it $Xus$-adapted path} (or simply an {\it adapted path}) is a piecewise smooth path $\gamma$ such that each of its legs is either a flow-line segment or  lies entirely in a stable or an unstable leaf. An {\it $Xus$-adapted loop} is an $Xus$-adapted path which begins and ends at the same point.



Given an adapted path or loop $\gamma$ we proceed to define {\it periodic cycle functionals} $PCF_\gamma:C^{\alpha}(M,\R)\to\R$ as follows.
If $\gamma$ lies entirely in a stable leaf then let
$$
PCF_{\gamma}(\phi)=\int_0^{\infty}\phi(X^t(\gamma(0)))-\phi(X^t(\gamma(1)))dt
$$
If $\gamma$ lies entirely in an  unstable leaf then let
$$
PCF_{\gamma}(\phi)=\int_{-\infty}^0\phi(X^t(\gamma(1)))-\phi(X^t(\gamma(0)))dt
$$
Note that convergence follows from exponential contraction/expansion and H\"older continuity of $\phi$. If $\gamma$ is a positively oriented orbit segment $\gamma=[x,\phi^T(x)]$, $T>0$, then let 
$$
PCF_{\gamma}(\phi)=\int_0^T\phi(X^t(x))dt
$$
and if $\gamma=[x,\phi^T(x)]$, $T<0$, then let
$$
PCF_{\gamma}(\phi)=-\int_T^0\phi(X^t(x))dt
$$
Finally for an adapted path $\gamma$ define $PCF_\gamma(\phi)$ as a the sum of values on each of the legs. The following properties are immediate from the definitions.
\begin{enumerate}
\item Any continuous path can be $C^0$ approximated by an $Xus$-adapted path;
\item The value of of $PCF_\gamma(\phi)$ only depends on the sequence of the endpoints of the legs of $\gamma$ and is independent of the choice of leg between the endpoints;
\item If $\bar\gamma$ denotes the adapted path (or loop) with reversed orientation then $PCF_{\bar\gamma}(\phi)=-PCF_\gamma(\phi)$;
\item Suppose $\alpha$ and $\beta$ are adapted loops such that $\alpha$ contains a subpath $\gamma$ and $\beta$ contains $\bar\gamma$, the same subpath with the opposite orientation. Concatenating  $\alpha$ with $\gamma$ removed and $\beta$ with $\bar\gamma$ removed results in a loop $\alpha*\beta$. Then we have the {\it additive property}
\begin{equation}
\label{eqpcf}
PCF_{\alpha*\beta}(\phi)=PCF_\alpha(\phi)+PCF_\beta(\phi)
\end{equation}
\end{enumerate}

\begin{remark}
As mentioned earlier, originally periodic cycle functionals were introduced by Katok and Kanonenko~\cite{KK} as obstructions, given by adapted $us$-loops, to solving the cohomological equation in the setting of partially hyperbolic diffeomorphisms when periodic orbits obstructions are not readily available. Note that our definition is different from the original one as we allow the flow direction in the definition of the adapted loop.
\end{remark}

If $\phi$ is an $X^t$-coboundary, $\phi=L_Xu$, then it is easy to see that in all three cases ($\gamma$ is contained in a stable leaf, unstable leaf or an orbit segment) we have $PCF_{\gamma}(\phi)=u(\gamma(1))-u(\gamma(0))$. Hence periodic cycle functionals of $Xus$-adapted loops vanish. Similarly if $\phi$ is an abelian coboundary then periodic cycle functions vanish on  homologically trivial $Xus$-adapted loops because these are the loops which can be lifted to the universal abelian cover where $\phi$ becomes a true coboundary. We prove that vanishing on homotopically (and even homologically) trivial $Xus$-adapted loops is also a sufficient condition for being an abelian coboundary.

\begin{theorem}\label{katokkononenkoabelianthm}
	Let $X^t\colon M\to M$ be a transitive Anosov flow and let $\phi\in C^{r}(M)$, $r>0$. Assume that $PCF_{\gamma}(\phi)=0$ for every homotopically trivial $Xus$-adapted loop $\gamma$. Then there exist a smooth closed 1-form $\omega$ and $u\in C^{r_*}$,  such that 
	$$\phi=\omega(X)+L_Xu$$
\end{theorem}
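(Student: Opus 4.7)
My plan is to mimic the classical Livshits argument at the level of the universal cover $\tilde M$. Denote by $\tilde\phi$ the lift of $\phi$ and by $\tilde X$ the lift of $X$. The first observation is that any adapted loop $\tilde\alpha$ in $\tilde M$ projects to an adapted loop in $M$ which, since $\tilde M$ is simply connected, is null-homotopic; hence by hypothesis $PCF_{\tilde\alpha}(\tilde\phi)=0$. Combined with the additivity property~(\ref{eqpcf}), this means that for any adapted path $\tilde\gamma$ in $\tilde M$ the quantity $PCF_{\tilde\gamma}(\tilde\phi)$ depends only on the endpoints of $\tilde\gamma$.

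Next, fix a base point $\tilde p\in\tilde M$ whose image $p\in M$ has dense $X^t$-orbit, which is possible by transitivity. I will build a H\"older function $\tilde u\colon\tilde M\to\R$ satisfying $L_{\tilde X}\tilde u=\tilde\phi$. Set $\tilde u(\tilde X^t\tilde p)\defin\int_0^t\tilde\phi(\tilde X^s\tilde p)\,ds$ on the orbit of $\tilde p$, extend to strong-stable and strong-unstable leaves through orbit points via the PCF formula, and extend to all of $\tilde M$ by continuity, repeating the stable/unstable/flow construction as needed. Exponential contraction on (un)stable leaves and H\"older regularity of $\tilde\phi$ provide the standard Livshits estimates that make $\tilde u$ H\"older; path-independence from the previous paragraph guarantees the construction is consistent regardless of which local chain of leaves is used. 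Equivalently, on any point reachable from $\tilde p$ by an adapted path we have $\tilde u(\tilde x)=PCF_{\tilde\gamma}(\tilde\phi)$.

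Finally, I extract the cohomology class and descend to $M$. For each deck transformation $\gamma\in\pi_1(M)$ the difference $\rho(\gamma)\defin\tilde u(\gamma\cdot\tilde x)-\tilde u(\tilde x)$ equals $PCF_{\tilde\eta}(\tilde\phi)$ for any adapted path $\tilde\eta$ from $\tilde x$ to $\gamma\cdot\tilde x$, and is independent of $\tilde x$ thanks to $\pi_1$-invariance of $\tilde\phi$ plus additivity of PCF. A short computation shows $\rho\colon\pi_1(M)\to\R$ is a homomorphism; since $\R$ is abelian, $\rho$ factors through $H_1(M,\Z)$ and extends uniquely to a linear functional on $H_1(M,\R)$, which I realize as the de Rham pairing with the cohomology class of a smooth closed 1-form $\omega$. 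Let $\Phi$ be a primitive on $\tilde M$ of the exact lift $\hat\omega$ with $\Phi(\tilde p)=0$. Then $\tilde u-\Phi$ is $\pi_1$-invariant and descends to $u\colon M\to\R$, and $L_{\tilde X}(\tilde u-\Phi)=\tilde\phi-\hat\omega(\tilde X)$ yields $\phi=\omega(X)+L_Xu$. Regularity of $u$ inherits from H\"older regularity of $\tilde u$ and smoothness of $\omega$, with smoothness along $X$ provided by $L_Xu=\phi-\omega(X)\in C^r$. The main obstacle is the H\"older extension of $\tilde u$: this is the analytic heart of the Livshits scheme and uses the classical shadowing and contraction estimates, which apply uniformly on $\tilde M$ because its metric is lifted isometrically from the compact base $M$.
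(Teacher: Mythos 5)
Your construction of the cohomology class is essentially the paper's: both define a primitive of $\phi$ on the universal cover via $PCF$ along adapted paths (well-defined because every adapted loop upstairs projects to a homotopically trivial adapted loop downstairs), extract a homomorphism $\pi_1(M)\to\R$ from the failure of deck-invariance, realize it by a smooth closed $1$-form $\omega$, and subtract. Where you diverge is in how the solution $u$ and its regularity are obtained, and this is where your argument has a genuine gap. The paper never claims any regularity for the $PCF$-primitive $\tilde u_a$; instead it observes that $\bar\phi=\phi-\omega(X)$ descends to $M$ and has vanishing integrals over \emph{all} periodic orbits of $X^t$ (a short computation with $c(T)$), and then invokes the classical Livshits theorem on the compact manifold $M$ to produce a H\"older $u$, followed by de la Llave--Marco--Moriy\'on and Journ\'e to upgrade to $C^{r_*}$ when $r>1$. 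You instead try to prove directly that the $PCF$-primitive $\tilde u$ is H\"older on $\tilde M$. That statement is true, but it does not follow from ``shadowing'' or from the classical Livshits estimate (which concerns integration along a single dense orbit closed up by the Anosov closing lemma). What is actually needed is the Katok--Kononenko-type estimate: two nearby points are joined by a short adapted path whose stable/unstable legs have length $O(d(\tilde x,\tilde y))$ by local product structure, and the $PCF$ of a stable leg of length $\delta$ is bounded by $\int_0^\infty C(\delta e^{-\lambda t})^{\alpha}\,dt=C'\delta^{\alpha}$. You assert this step rather than prove it, and it is the analytic core of your route.

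A second, separate gap: the theorem asserts $u\in C^{r_*}$, so for $r>1$ you must produce regularity beyond H\"older. Your closing sentence only yields H\"older continuity of $u$ together with differentiability along $X$; it says nothing about regularity along the stable and unstable directions, which is exactly what the smooth Livshits theorem of de la Llave--Marco--Moriy\'on provides and what Journ\'e's lemma assembles into $C^{r_*}$ regularity. The paper's reduction to the periodic-orbit Livshits theorem on $M$ is precisely what lets it quote these results off the shelf; if you insist on working only with the $PCF$-primitive on $\tilde M$, you would have to reprove the bootstrap there. The cleanest repair of your argument is to adopt the paper's final step: having found $\omega$, check that $\int_\gamma(\phi-\omega(X))=0$ for every periodic orbit $\gamma$ of $X^t$ on $M$, and then apply the classical and smooth Livshits theorems to $\phi-\omega(X)$.
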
	
In the Theorem above, $r_*=r$ if $r\notin \N$ and $r_*=r-1+\tiny{Lip}$ if $r\in\N$.

We immediately obtain the following corollary.
\begin{corollary}
	If  function $\phi:M\to\R$ above is $C^\infty$ smooth then there is a $C^\infty$ smooth closed 1-form $\omega$ such that 
	$$\phi=\omega(X)$$ 
\end{corollary}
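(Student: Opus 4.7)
The plan is to derive the corollary as an almost immediate consequence of Theorem~\ref{katokkononenkoabelianthm}, using the fact that smoothness of $\phi$ forces the transfer function $u$ to be smooth as well, and that once $u$ is smooth, the Lie derivative term $L_X u$ is itself the contraction of a smooth closed $1$-form.

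First I would apply Theorem~\ref{katokkononenkoabelianthm} directly to $\phi$, since $\phi \in C^\infty \subset C^{r}$ for every $r > 0$. This yields a smooth closed $1$-form $\omega_0$ and a function $u \in C^{r_*}$ with $\phi = \omega_0(X) + L_X u$. Since the hypothesis holds with $r$ arbitrarily large, the regularity statement of the theorem (with $r \notin \N$, so $r_* = r$) gives that $u$ is of class $C^r$ for every $r$, hence $u \in C^\infty$. (If one is uncomfortable invoking the theorem with $r = \infty$, one applies it for every finite non-integer $r$ and uses uniqueness of $u$ up to an additive constant along orbits to conclude smoothness.)

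Next I would absorb the coboundary term into the closed $1$-form. For a smooth function $u$ on $M$, the Lie derivative along $X$ satisfies $L_X u = du(X)$, where $du$ is a smooth \emph{exact} (in particular closed) $1$-form. Therefore
\begin{equation*}
\phi = \omega_0(X) + L_X u = \omega_0(X) + du(X) = (\omega_0 + du)(X).
\end{equation*}
Setting $\omega \defin \omega_0 + du$, we have that $\omega$ is smooth and closed (as a sum of a smooth closed form and a smooth exact form), and $\phi = \omega(X)$, as desired.

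There is no real obstacle in this proof beyond invoking Theorem~\ref{katokkononenkoabelianthm}: the only content is the identity $L_X u = du(X)$ for smooth $u$, which allows one to trade the Lie derivative summand for a modification of the cohomology class representative. The cohomology class $[\omega] \in H^1(M,\R)$ produced by the corollary is of course the same as $[\omega_0]$ from the theorem, since they differ by an exact form; this reflects Remark~\ref{rmk_liv} on the non-uniqueness of the decomposition.
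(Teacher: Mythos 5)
Your proposal is correct and is exactly the argument the paper intends when it says the corollary is obtained ``immediately'': apply Theorem~\ref{katokkononenkoabelianthm} for arbitrarily large $r$ to get a smooth transfer function $u$, then absorb $L_Xu=du(X)$ into the closed form by replacing $\omega_0$ with $\omega_0+du$ (cf.\ Remark~\ref{rmk_liv}). No further comment is needed.
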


\begin{proof}[Proof of Theorem \ref{katokkononenkoabelianthm}]

We lift all the objects to the universal cover $\tilde M$ and, by a light abuse of notation, we still denote by $\phi$ and $X$ the lifts of $\phi$ and $X$ to the universal cover. Pick a point $a\in\tilde M$ and define $\tilde u_{a}:\tilde M\to\R$ in the following way. Given a point $x\in\tilde M$ let $\gamma$ be an adapted path starting at $a$ and ending at $x$ we set 
$$
\tilde u_a(x)=PCF_\gamma(\phi)
$$
 This definition is independent of the choice of $\gamma$ because we have assumed that periodic cycle functionals vanish on homotopically trivial adapted loops. 
 Note that, because we can assume that the last leg of $\gamma$ is a flow segment, we have $L_X\tilde u_a=\phi$ (though we will not use this last fact).
 It easily follows from the definition that for any pair of points $a,b\in\tilde M$
 \begin{equation}
 \label{u_a}
 \tilde u_a-\tilde u_b=\tilde u_a(b)
 \end{equation}

	Let $\cD\simeq\pi_1M$ be the group of deck transformations acting on $\tilde M$. The function $\tilde u_a$ solves the cohomological equation on $\tilde M$ , but a priori  is not $\cD-$invariant, so it needs to be adjusted.

	Let $T\in \cD$ and let $\gamma$ be an adapted path. Because $\phi\circ T=\phi$ we have 
	$$PCF_\gamma(\phi)=PCF_{T(\gamma)}(\phi)$$ 
	Hence for every $T\in\cD$ 
	$$
	\tilde u_{T(a)}(T(x))=\tilde u_a(x)
	$$
Define $c\colon\cD\to\R$ in the following way
 $$
 \tilde u_{a}(T(x))-\tilde u_a(x)=\tilde u_{a}(T(x))-\tilde u_{T(a)}(T(x))=\tilde u_a(T(a))\stackrel{\mathrm{def}}{=}c(T)\in\R
 $$
 where we have used~(\ref{u_a}).
 Then $c$ is a homomorphism. Indeed,
 $$
 c(T\circ S)=\tilde u_a(T(S(a))=\tilde u_a(T(S(a))-\tilde u_a(S(a))+\tilde u_a(S(a))=c(T)+c(S)
 $$
  Notice also that $c$ does not depend on the choice of the base point $a$.
  
Now we use the isomorphism $Hom(\cD,\R)\simeq H^1(M,\R)$. Recall that the cohomology class corresponding to $c\colon\cD\to\R$ is represented by a closed 1-form $\omega$ such that
   $$
   c(T)=\int_{\gamma_T}\omega(\dot\gamma_T(s))ds
   $$
   where $\gamma_T$ is any curve starting at $x$ and ending at $T(x)$.

Then $\bar\phi=\phi-\omega(X)$ is invariant under the action of $\cD$ and hence descends to a function $\bar\phi\colon M\to\R$. Now take any periodic orbit $\gamma$ in $M$ and lift it to an orbit segment $\tilde\gamma$ in $\tilde M$. Then, of course, $PCF_{\tilde\gamma}(\bar\phi)=PCF_{\gamma}(\bar\phi)$. Let $x$ and $T(x)$, $T\in\cD$, be the endpoints of $\tilde\gamma$. We have

\begin{eqnarray*}
	\int_\gamma\bar\phi&=&\int_{\tilde\gamma}\bar\phi=\int_{\tilde\gamma}\phi-\int_{\tilde\gamma}\omega(X)=\tilde u_{x}(T(x))-\tilde u_x(x)-c(T)\\
	&=&\tilde u_a(T(x))-\tilde u_a(x)-c(T)=0
\end{eqnarray*}
Hence
 $$\int_\gamma\bar\phi=0$$ 
 for every closed orbit $\gamma$. Then by Livshits Theorem~\cite{Liv} there exists a H\"older continuous $u$, continuously differentiable along $X$ such that
 that $L_Xu=\bar\phi$, \ie
 $$
 \phi=L_Xu+\omega(X)
 $$
 Further the H\"older exponent of $u$ is the same as the H\"older exponent for $\phi$. If $\phi$ is $C^r$ with $r>1$ then de la Llave-Marco-Mariy\'on smooth Livshits Theorem~\cite[Appendix A]{LMM} applies and together with Journ\'e's regularity lemma~\cite{journe} yields $C^{r_*}$ regularity of $u$.
\end{proof}

\subsection{Livshits Theorem for homologically trivial orbits.}

\begin{theorem}\label{livsic}
	Assume that $X^t\colon M\to M$ is a homologically full transitive Anosov flow and let $\phi\colon M\to\R$ be a $C^{r}$, $r>0$ function such that
	$$
	\int_\gamma\phi=0
	$$
	 for all homologically trivial closed orbits $\gamma$. Then there is a $C^\infty$ smooth closed 1-form $\omega$ on $M$ and a function $u\in C^{r_*}(M)$  such that 
	 $$
	 \phi=\omega(X)+L_Xu
	 $$ 
\end{theorem}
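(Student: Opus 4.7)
The plan is to reduce Theorem~\ref{livsic} to the classical Livshits theorem~\cite{Liv} by producing a smooth closed 1-form $\omega$ so that $\phi-\omega(X)$ integrates to zero on \emph{every} periodic orbit, not merely on the homologically trivial ones. The idea is to extract from $\phi$ a ``periods homomorphism'' $c\colon H_1(M,\Z)\to\R$ and realize it as integration against a de Rham class.

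\emph{Step 1 (defining $c$ on integer homology).} Because $X^t$ is homologically full, Theorem~\ref{thm_sharp}(i) guarantees that each class $h\in H_1(M,\Z)$ is represented by at least one periodic orbit of $X^t$. I would tentatively set $c(h)=\int_\gamma\phi$ for any periodic orbit $\gamma$ with $[\gamma]=h$ and then verify this is well-defined and a homomorphism.

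\emph{Step 2 (well-definedness and additivity).} Suppose $\gamma_1,\gamma_2$ are two periodic orbits with $[\gamma_1]=[\gamma_2]$. The strategy is to produce, for every $\eps>0$, a closed orbit $\eta_\eps$ that $\eps$-shadows $\gamma_1$ followed by $\bar\gamma_2$, connected by short legs whose lengths tend to zero with $\eps$. Fried's construction (as used in Lemma~\ref{lemma_dense}) provides an immersed surface whose boundary certifies that $[\eta_\eps]=[\gamma_1]-[\gamma_2]=0$, so $\eta_\eps$ is homologically trivial and the hypothesis forces $\int_{\eta_\eps}\phi=0$. Combining exponential shadowing with H\"older continuity of $\phi$ yields
$$
\int_{\eta_\eps}\phi \;=\; \int_{\gamma_1}\phi \;-\; \int_{\gamma_2}\phi \;+\; O(\eps^r),
$$
whence $\int_{\gamma_1}\phi=\int_{\gamma_2}\phi$ and $c$ is well-defined. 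An analogous shadowing argument, now concatenating $\gamma_1$ with $\gamma_2$, produces periodic orbits of homology $[\gamma_1]+[\gamma_2]$ whose $\phi$-integrals approximate $\int_{\gamma_1}\phi+\int_{\gamma_2}\phi$; together with well-definedness this upgrades $c$ to a homomorphism $H_1(M,\Z)\to\R$.

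\emph{Step 3 (the 1-form $\omega$ and reduction to classical Livshits).} Since $c$ is $\Z$-linear and vanishes on torsion, it extends to an element of $\mathrm{Hom}(H_1(M,\R),\R)\simeq H^1(M,\R)$. Choose any smooth closed 1-form $\omega$ representing this cohomology class; by construction $\int_\gamma\omega=c([\gamma])$ for every closed loop $\gamma$. Set $\bar\phi:=\phi-\omega(X)$. Then for every periodic orbit $\gamma$,
$$
\int_\gamma\bar\phi \;=\; \int_\gamma\phi-\int_\gamma\omega \;=\; c([\gamma])-c([\gamma])\;=\;0.
$$
The classical Livshits theorem~\cite{Liv} furnishes a H\"older $u$, differentiable along $X$, with $L_Xu=\bar\phi$, so $\phi=\omega(X)+L_Xu$. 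Promotion to $C^{r_*}$ is then routine via the smooth Livshits theorem~\cite[Appendix A]{LMM} combined with Journ\'e's regularity lemma~\cite{journe}, exactly as in the end of the proof of Theorem~\ref{katokkononenkoabelianthm}.

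\emph{Main obstacle.} The heart of the matter is Step~2: extracting an \emph{exact} equality of integrals out of \emph{asymptotic} shadowing. What needs care is that the period of $\eta_\eps$ stays essentially equal to $T_{\gamma_1}+T_{\gamma_2}$ (a fixed quantity), so that as the shadowing distance $\eps$ and the lengths of the connecting legs shrink, the H\"older error $O(\eps^r)$ genuinely vanishes. It is this finiteness of the shadowed length, together with Fried's control of the homology class of the shadowing orbit, that allows the hypothesis on only homologically trivial periodic orbits to dictate the periods of a globally defined 1-form on $M$.
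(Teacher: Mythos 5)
Your overall strategy---extract a ``periods homomorphism'' $c\colon H_1(M,\Z)\to\R$ from the integrals of $\phi$ over periodic orbits, realize it by a smooth closed form $\omega$, and apply the classical Livshits theorem to $\phi-\omega(X)$---is sound and genuinely different from the paper's proof, which instead lifts everything to the universal abelian cover, uses transitivity of the lift (Theorem~\ref{thm_ab_transitive}) to construct a solution $\hat u$ along a single dense orbit, and reads the homomorphism off from the defect of $\cD$-invariance of $\hat u$. Your route would bypass the abelian cover entirely. However, Step~2 as written has a genuine gap, in two places.

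First, a closed orbit of $X^t$ cannot $\eps$-shadow ``$\gamma_1$ followed by $\bar\gamma_2$'': near any point of $\gamma_2$ the flow direction is prescribed, so any true orbit that stays $\eps$-close to $\gamma_2$ over a time interval must traverse it forward in time; a reversed orbit segment is not an admissible leg of a pseudo-orbit, and no shadowing orbit $\eta_\eps$ of the kind you describe exists. Second, even after replacing $\bar\gamma_2$ by a periodic orbit $\gamma'$ with $[\gamma']=-[\gamma_2]$ (which exists by homological fullness), the orbits being concatenated are fixed disjoint compact sets at positive distance, so the jumps at the transition points --- and hence the H\"older error in the integral --- are bounded below by a constant that does not tend to zero with $\eps$; the claimed $O(\eps^r)$ never materializes. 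The standard repair is the repetition trick: shadow $n$ copies of $\gamma_i$ followed by $n$ copies of $\gamma'$, obtaining a homologically trivial periodic orbit $\eta_n$ (Fried's construction, or the shadowing homotopy, controls its class); since there are only two transition points, exponential closing gives $0=\int_{\eta_n}\phi=n\bigl(\int_{\gamma_i}\phi+\int_{\gamma'}\phi\bigr)+O(1)$ uniformly in $n$, whence $\int_{\gamma_1}\phi=\int_{\gamma_2}\phi=-\int_{\gamma'}\phi$, and the same device yields additivity. You must also check that $c$ vanishes on torsion classes (traverse a representative $k$ times and apply the hypothesis to the resulting homologically trivial closed orbit) before invoking $H^1(M,\R)\simeq\mathrm{Hom}(H_1(M,\R),\R)$ in Step~3, which otherwise goes through exactly as in the end of the proof of Theorem~\ref{katokkononenkoabelianthm}. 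With these repairs your argument becomes a correct alternative proof; as written, Step~2 fails.
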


\begin{remark} Notice that any homologically trivial periodic orbit bounds a $2$-cycle. Hence, using~(\ref{eqpcf}), the integral $\int_\gamma\phi$ can be decomposed into sum of periodic cycle functionals of homotopically trivial $Xus$-adapted loops. Hence, it is easy to see that vanishing of periodic cycle functionals on homotopically trivial $Xus$-adapted loops implies vanishing on all homologically trivial periodic orbits. Thus Theorem~\ref{livsic} can be viewed as a strengthening of Theorem~\ref{katokkononenkoabelianthm} in the setting of homologically full Anosov flows.
\end{remark}


\begin{proof}
	
	Let $\hat M$ be  the universal abelian cover of $M$. We denote by $\hat\phi$ the lift of $\phi$ to $\hat M$ and we still write $X^t$ for the lift of the flow as it won't cause any confusion. By Theorem~\ref{thm_ab_transitive} the lifted flow is transitive.
	
	 Let $x\in\hat M$ be a point with a dense orbit. Define 
	$$
	\hat u(X^t(x))=\int_0^t\hat\phi(X^\tau(x))d\tau
	$$ 
	By the classical argument of Livshits, $u$ is H\"older continuous (with a uniform constant) on the orbit of $x$ and hence extends to a H\"older function on $\hat M$. Further $\hat u$ is continuously differentiable along the flow direction and solves the cohomological equation
	 $$
	 L_X\hat u=\hat\phi
	 $$
	
	Let $\cD\simeq H_1(M,\Z)$ be the group of deck transformations of the covering $\hat M\to M$. Because $\hat\phi$ is $\cD$ invariant, we have $L_X(\hat u\circ T-\hat u)=0$ for every $T\in\cD$. And because $X^t$ has a dense orbit we conclude that $\hat u\circ T-\hat u$  is constant. Let $c(T)=\hat u\circ T-\hat u$. Then $c:\cD\to\R$ is a homomorphism, indeed
	$$
	c(T\circ S)=\hat u\circ (T\circ S)-\hat u=\hat u\circ (T\circ S)-\hat u\circ S+\hat u\circ S-\hat u=c(T)+c(S)
	$$ 
Now identify $\cD$ with its orbit in $\hat M$. By the de Rham Theorem we can extend $c:\cD\to\R$ to a smooth function $c\colon\hat M\to \R$ which is equivariant with respect to the $\cD$ action, that is,
$$
c\circ T- c=c(T)
$$
Let $\hat\omega=dc$. Then $\hat \omega$ is an exact 1-form which is invariant under the action of $\cD$. Hence it descends to a closed 1-form $\omega$ on $M$. (Form $\omega$ is a de Rham representative in the cohomology class given by $c\in \textup{Hom}(H_1(M,\R),\R)$.)
Function $\hat u-c$ is $\cD$-invariant and hence descends to a function $u$ on $M$. We have
$$
L_X(\hat u-c)=\hat\phi-\hat \omega(X)
$$
Thus
$$
L_Xu=\phi-\omega(X)
$$
Finally, if $\phi$ is $C^r$ then, as in the proof of Theorem~\ref{katokkononenkoabelianthm},  smooth Livshits theory~\cite[Appendix A]{LMM}, \cite{journe} yields $C^{r_*}$ regularity of $u$.
\end{proof}


\subsection{Integer periods Livshits Theorem}

We will prove following proposition using the circle valued Livshits Theorem.

\begin{proposition}\label{proposition:periods}
	Let $X^t\colon M\to M$ be a transitive Anosov flow and let $\phi\colon M\to \R$ be a H\"older  continuous function. Let 
	$$
	\scP=\left\{\int_\gamma\phi:\gamma\in Per(X)\right\}\subset\R
	$$ 
	be the set of periods of $\phi$. If the rank of the additive group generated by $\scP$ is one, then there are a smooth closed $1-$form $\omega$ and a H\"older continuous function $u$ such that $\phi$ is an abelian coboundary, \ie
	$$
	\phi=\omega(X)+L_Xu
	$$
\end{proposition}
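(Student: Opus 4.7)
The plan is to reduce to a circle-valued version of Livshits' theorem. Under the rank-one hypothesis one may choose $c \geq 0$ with $\scP \subset c\Z$. If $c = 0$ all periods vanish and the classical Livshits theorem gives $\phi = L_X u$ with $\omega = 0$, so assume $c > 0$ and set $\phi' := \phi/c$; then every period of $\phi'$ is an integer.

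Next I would invoke the circle-valued Livshits theorem: a H\"older continuous function on a transitive Anosov flow whose period on every periodic orbit is an integer is of the form $L_X v$ for a H\"older continuous $v\colon M \to \R/\Z$, continuously differentiable along $X$. The proof is the standard one: on the orbit of a transitive point $x_0$ set $v(X^t x_0) := \int_0^t \phi'(X^s x_0)\,ds \pmod 1$; the Anosov closing lemma combined with the integer-periods assumption yields a H\"older estimate modulo $\Z$ that allows continuous extension to $M$. This produces $v\colon M \to \R/\Z$ with $L_X v = \phi'$.

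Now I would split off the cohomological part of $v$. The map $v$ is classified up to homotopy by an element $\xi \in [M,\R/\Z] \cong H^1(M,\Z)$. Pick a smooth representative $v_0\colon M \to \R/\Z$ of this homotopy class; its differential $\omega_0 := dv_0$ is a smooth closed 1-form on $M$ with integer periods, and $L_X v_0 = \omega_0(X)$. The difference $v - v_0$ is null-homotopic as a map to the circle, hence lifts along the covering $\R \to \R/\Z$ to a continuous $h\colon M \to \R$; since this covering is a local isometry and both $v,v_0$ are H\"older, $h$ is H\"older, and it inherits differentiability along $X$ from $v$.

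Putting the pieces together, $\phi/c = L_X v = \omega_0(X) + L_X h$, so $\phi = (c\omega_0)(X) + L_X(ch)$, and setting $\omega := c\omega_0$ and $u := ch$ finishes the argument. The main point is the statement and proof of the circle-valued Livshits theorem in the required H\"older class; once that is in hand, the decomposition of the resulting circle-valued $v$ into a closed-form part plus a real-valued transfer function is an essentially formal use of the $\R \to \R/\Z$ covering.
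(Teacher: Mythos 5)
Your argument is correct and is essentially the paper's own proof: both reduce to integer periods by rescaling, invoke the circle-valued Livshits theorem to produce a H\"older $v\colon M\to\R/\Z$ with $L_Xv=\phi/c$, and then split $v$ into a smooth circle-valued map (whose differential gives the closed $1$-form) plus a real-valued H\"older transfer function. The only cosmetic difference is that the paper obtains the smooth comparison map via Whitney $C^0$-approximation of $v$ (so the difference has small image and lifts), whereas you take any smooth representative of the homotopy class of $v$ (so the difference is null-homotopic and lifts); these are interchangeable.
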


Recall that integral cohomology $H^1(M,\Z)$ is torsion-free and can be regarded as Bruschlinsky group of homotopy classes of maps $\{[M\to S^1]\}$. Indeed,  a smooth function $w\colon M\to S^1$ defines a closed integral form $\omega=dw$. The correspondence $[w]\mapsto[\omega]\in H^1(M,\Z)$ is in fact an isomorphism. 

Note that, if $w\colon M\to S^1$ is not smooth then, by Whitney Approximation Theorem it can be approximated by a smooth map $v\colon M\to S^1$. The function $\bar u=w-v$ has its image in a small interval and hence lifts to a function 
$u\colon M\to\R$.

\begin{proof}
	Assume that $\scP\subset c\Z$, $c\neq 0$. By a constant reparametrization we can assume that $c=1$. Consider the cocycle
	$$
	\hat\phi(x, T)=\int_0^T\phi(X^s(x))ds\in\R
	$$
	 and consider $\Phi(x,T)=[\hat\phi(x, T)]\in S^1=\R/\Z$. By assumption, for every  periodic  orbit $p$ of period $T_p$ we have $\Phi(p,T_p)=0$. Hence, by applying $S^1$-valued Livshits Theorem, there exists a function $w:M\to S^1$, which is differentiable along the flow direction, such that 
	 $$
	 \Phi(x,T)=w(X^T(x))-w(x)
	 $$
	  for every $x\in M$ and $T\in\R$. By applying the preceding discussion to $w$ we have the decomposition $w=\bar u+v$, where $\omega=dv$ is smooth and $\bar u\colon M\to S^1$ lifts to a function $u\colon M\to\R$. Since $v$ is smooth, $\bar u$ and $u$ are also differentiable along the flow direction. So, taking the limit of 
	  $$\frac1T\Phi(x,T)=\frac1T(w(X^T(x))-w(x))=\frac1T(\bar u(X^T(x))-\bar u(x))+\frac1T(v(X^T(x))-v(x))$$
	   as $T\to 0$, we obtain that $\phi$ is an abelian coboundary
	  $$\phi=L_X\bar u+\omega(X)=L_Xu+\omega(X)$$	
\end{proof}

\begin{remark} If $\phi=1$, \ie the flow $X^t$ only has  integer length periodic orbits then we have $\omega(X)=1$. Therefore by Schwartzman's theorem~\cite{Sch} the flow is a suspension. Further if we denote  by $S\subset M$ the section then for any periodic orbit $\gamma$ we have
$$
\ell(\gamma)=\int_\gamma\omega(X)dt=\langle \omega, \gamma\rangle =c\cdot \#\{S\cap \gamma\}
$$
Hence, we can apply the Livshits Theorem to the roof function and obtain that the roof function is smoothly cohomologous to a constant $c$, \ie the flow $X^t$ is a constant roof suspension over an Anosov diffeomorphism.
\end{remark}

\begin{question} In the setting of Proposition~\ref{proposition:periods} assume that $\scP$ has finite rank instead of rank one. Does there exist $\omega\in H^1(M,\R)$ and a H\"older continuous function $u$ such that
	$$
	\phi=\omega(X)+L_Xu?
	$$
\end{question}

\section{Reparametrizations of flows}
\label{sec_rep}

In this section we introduce some preliminaries on reparametrized flows (see Parry~\cite{P} for a more detailed introduction).
We study how equilibrium states change under reparamerization. We also examine the behavior of Sharp's minimizer under reparametrization.

 Given a $X^t$-invariant measure $\mu$ we will denote by $h_\mu(X)$ the {\it metric entropy} of $X^t$.
 Also recall that given a flow $X^t\colon M\to M$ and a H\"older continuous function $\phi\colon M\to\R$ the {\it pressure} $P_X(\phi)$ is defined by 
$$
P_X(\phi) =\sup\left\{ {h_\nu(X) + \int \phi d\nu : \nu\,\,\, \mbox{invariant probability measure}}\right\}
$$
When $X^t$ is a transitive Anosov flow, the unique measure $\nu_\phi=\nu_{\phi,X}$ realizing the supremum is called the {\it equilibrium state} of $\phi$ with respect to $X^t$.

Let $X^t\colon M\to M$ be a flow generated by the vector field $X$, let $\ell\colon M\to \R$ be a positive function. Define $Z=\ell X$ to be the generator of the {\it reparametrized flow} $Z^t$. Then $Z^t=X^{\tau_t}$. Here $\tau_t\colon M\to\R$ is the $Z^t$-cocycle with infinitesimal generator $\ell$  and is given by
$$
\tau_t(x)=\int_0^t\ell(Z^s(x))ds
$$ 
Similarly, if $k=1/\ell$, then $X^t=Z^{\kappa_t}$, where $\kappa_t\colon M\to\R$ is a $X^t$-cocycle given by
$$
\kappa_t(x)=\int_0^t k(X^s(x))ds
$$ 
\begin{lemma}\label{lemma_repar}
A reparametrization $Z^t$ as above is conjugate to $X^t$ via a time$-u$ map $X^u\colon M\to M$, $u\colon M\to\R$, if and only if
$$
L_Xu=\frac1\ell-1
$$
Further, two reperamerizations, $Z_i=\ell_iX$, $i=1,2$, are mutually conjugate via $X^u$ if and only if
$$
L_Xu=\frac1{\ell_1}-\frac1{\ell_2}
$$
\end{lemma}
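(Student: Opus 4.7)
For the first equivalence, my plan is a direct cocycle calculation. Let $F(x):=X^{u(x)}(x)$. The intertwining $F\circ Z^t = X^t\circ F$ expands, via $Z^t(x)=X^{\tau_t(x)}(x)$ and the additive group law of $X^t$, to the pointwise relation
\[
u(Z^t x) + \tau_t(x) = t + u(x).
\]
Differentiating this at $t=0$, and using $\frac{d}{dt}\tau_t(x)|_{t=0}=\ell(x)$ together with $L_Z=\ell\,L_X$, one finds $\ell(L_X u + 1)=1$, i.e.\ $L_X u = 1/\ell - 1$. Conversely, given $L_X u = 1/\ell - 1$, I will check that both sides of the displayed identity vanish at $t=0$ and have identical $t$-derivatives for all $t$: the $t$-derivative of the left-hand side is $\ell(Z^t x)(L_X u + 1)(Z^t x) = 1$ by hypothesis, matching that of the right. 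Hence the identity holds for all $t$, which is exactly the conjugacy $F\circ Z^t = X^t\circ F$.

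For the second equivalence, the natural approach is to reduce it to the first by a reparametrization. Since $Z_1=\ell_1 X=(\ell_1/\ell_2)\,Z_2$, the flow $Z_1^t$ is itself a reparametrization of $Z_2^t$ with speed function $\ell_1/\ell_2$. Applying the first part with $(X,Z,\ell)$ replaced by $(Z_2, Z_1, \ell_1/\ell_2)$ gives that the time-$u$ map along the common orbits conjugates $Z_1^t$ to $Z_2^t$ if and only if
\[
L_{Z_2} u = \frac{1}{\ell_1/\ell_2} - 1 = \frac{\ell_2}{\ell_1} - 1.
\]
Substituting $L_{Z_2}=\ell_2\,L_X$ and dividing by $\ell_2$ yields the stated identity $L_X u = 1/\ell_1 - 1/\ell_2$.

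I do not anticipate any substantive obstacle; the whole argument is a pair of routine cocycle manipulations. The only care needed is in keeping track of which flow's Lie derivative is taken (since $L_Z = \ell\,L_X$ pulls in a factor of $\ell$), and in the converse direction verifying the integrated functional equation by matching values at $t=0$ and $t$-derivatives. The orbit-sharing of $X$, $Z_1$, $Z_2$ is what makes the reparametrization reduction in part two clean.
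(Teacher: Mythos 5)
Your argument is correct and is essentially identical to the paper's proof: both derive the functional equation $u(Z^tx)+\tau_t(x)=t+u(x)$, obtain $L_Xu=1/\ell-1$ by differentiating at $t=0$ (resp. recover the equation by integrating in the converse direction), and reduce the second claim to the first via $Z_1=(\ell_1/\ell_2)Z_2$ together with $L_{Z_2}=\ell_2 L_X$. No changes needed.
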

\begin{proof} The conjugacy relation $X^u\circ Z^t=X^t\circ X^u$ yields
$$
u(Z^tx)+\tau_t(x)=t+u(x)
$$
or
$$
u(Z^tx)-u(x)=t-\tau_t(x)
$$
Dividing by $t$ and taking the limit as $t\to 0$ gives $L_Zu=1-\ell$. It remains to notice that $L_Zu=\ell L_Xu$. To show that $X^u$ is a conjugacy when $L_Xu=1/\ell-1$ one works backwards to obtain  $X^u\circ Z^t=X^t\circ X^u$  by integrating.

For the last statement notice that $Z_1=\frac{\ell_1}{\ell_2}Z_2$ and apply the criterion.
\end{proof}

Also recall the following result of Anosov and Sinai.
\begin{proposition}\label{prop3.2} If $X^t\colon M\to M$ is an Anosov flow and $Z^t$ is a smooth reparametrization of $X^t$, that is $Z=\ell X$, where $\ell$ is positive and smooth then $Z^t$ is also Anosov.
\end{proposition}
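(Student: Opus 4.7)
Write $TM = E^s \oplus E^u \oplus \R X$ for the Anosov splitting of $X^t$. Since $Z=\ell X$ with $\ell>0$, the orbits of $Z^t$ coincide pointwise with those of $X^t$, so the weak stable and weak unstable foliations of $X^t$ remain invariant under $Z^t$; equivalently, the bundles
$$E^{cs}=E^s\oplus\R X,\qquad E^{cu}=E^u\oplus\R X$$
are $DZ^t$-invariant, as is the orbit line bundle $\R Z=\R X$. My plan is to extract, inside these already-invariant bundles, subbundles $\tilde E^s\subset E^{cs}$ and $\tilde E^u\subset E^{cu}$ which are $DZ^t$-invariant, complementary to $\R Z$, and exponentially hyperbolic, so that $\tilde E^s\oplus\tilde E^u\oplus \R Z$ serves as an Anosov splitting for $Z^t$.

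Differentiating the identity $Z^t(x)=X^{\tau_t(x)}(x)$ in $x$ yields
$$DZ^t_x(v)=DX^{\tau_t(x)}_x(v)+d\tau_t(v)\,X(Z^t x),$$
so modulo $\R Z$ the cocycle $DZ^t$ agrees with $DX^{\tau_t}$ acting on the quotient $TM/\R Z\cong E^s\oplus E^u$. Compactness of $M$ and positivity of $\ell$ give constants $0<m\le \ell\le M<\infty$, whence $mt\le \tau_t(x)\le Mt$ for $t>0$. Consequently the induced $DZ^t$-action on the quotient contracts the $E^s$-factor and expands the $E^u$-factor at exponential rates equal to those of $X^t$ rescaled by a factor lying in $[m,M]$.

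To promote this quotient-level hyperbolicity to honest subbundles of $TM$ I would run a standard graph-transform argument. Any subspace of $E^{cs}(x)$ of dimension $\dim E^s$ transverse to $X(x)$ has the form
$$\tilde E^s_\phi(x)=\{v+\phi_x(v)\,X(x):v\in E^s(x)\}$$
for a unique linear $\phi_x\colon E^s(x)\to\R$, and the invariance condition $DZ^t\,\tilde E^s_\phi(x)=\tilde E^s_\phi(Z^tx)$ translates into a fiberwise contracting functional equation for the family $\{\phi_x\}$, whose unique bounded continuous solution defines $\tilde E^s$; the symmetric construction inside $E^{cu}$ yields $\tilde E^u$. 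The main technical point, and the one that takes a moment to verify, is that $DZ^t$ contracts $\tilde E^s$ in an honest Riemannian norm on $TM$, not merely on the quotient; this is immediate once $\tilde E^s$ is available, because the projection $\tilde E^s(x)\to E^s(x)$ along $\R X$ is a uniform linear isomorphism conjugating $DZ^t|_{\tilde E^s}$ to $DX^{\tau_t}|_{E^s}$, so the exponential rate is authentic. The analogous argument gives exponential expansion on $\tilde E^u$, completing the verification that $Z^t$ is Anosov.
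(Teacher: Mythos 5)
The paper does not actually prove this proposition: it is quoted as a classical result of Anosov and Sinai, so there is no in-paper argument to compare yours against. Your sketch is the standard proof and is essentially sound. The key observations are all correct and are the right ones to isolate: $E^{cs}$, $E^{cu}$ and $\R X=\R Z$ are $DZ^t$-invariant (this follows directly from your identity $DZ^t_xv=DX^{\tau_t(x)}_xv+d\tau_t(v)X(Z^tx)$, with no need to invoke regularity of the weak foliations); the quotient cocycle on $TM/\R X$ is $DX^{\tau_t}$ with $\tau_t\ge(\min\ell)\,t$; and once $\tilde E^s$ is in hand, the projection along $\R X$ conjugates $DZ^t|_{\tilde E^s}$ to $DX^{\tau_t}|_{E^s}$ and turns quotient contraction into genuine contraction. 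One caution on the graph transform: for graphs over $E^s$ with values in $\R X$, pushing forward by $DZ^T$ produces a slope map involving precomposition with $(DX^{\tau_T}|_{E^s})^{-1}$, whose norm is large, so the \emph{forward} transform is expanding on slopes; to get a contraction you must run the transform in backward time for $\tilde E^s$ (and in forward time for $\tilde E^u\subset E^{cu}$), and also verify that the affine term coming from $d\tau_{-T}$ stays uniformly bounded on the relevant subspaces. If you prefer to bypass the fixed-point argument, the slope can be written explicitly: differentiating the relation $\int_0^{\tau_t(x)}k(X^rx)\,dr=t$, with $k=1/\ell$, along $v\in E^s(x)$ leads to
$$
\phi_x(v)=\ell(x)\int_0^\infty dk|_{X^rx}\bigl(DX^r_xv\bigr)\,dr,
$$
an absolutely and uniformly convergent integral by the exponential contraction on $E^s$; one checks directly that $w=v+\phi_x(v)X(x)$ satisfies $\|DZ^tw\|\le Ce^{-\lambda(\min\ell)t}\|w\|$, and invariance and continuity of $\tilde E^s=\{v+\phi_x(v)X(x):v\in E^s(x)\}$ follow at once from the characterization of $\tilde E^s(x)$ as the set of vectors whose forward $DZ^t$-orbit decays.
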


If $\mu$ is a $X$-invariant measure then 
$$
\mu_k=\frac{k}{\int k d\mu}\mu
$$ 
is $Z^t$-invariant. Recall that by definition, entropy of a flow is the entropy of its time-1 map. Then the Abramov entropy formula gives
$$
h_{\mu_k}(Z)=\frac{h_\mu(X)}{\int k d\mu}
$$

\begin{proposition}\label{pressureforreparametrization}
	Let $X^t\colon M\to M$ be a  transitive Anosov flow and let $Z=\ell X$, $\ell>0$, be a smooth reparametrization. Let $\phi\colon M\to\R$ be a H\"older continuous function and let $k={1}/{\ell}$. Then 
	$$
	P_Z(\ell(\phi-P_X(\phi)))=0
	$$ 
	Moreover, if $\nu_{\phi,X}$ is the equilibrium measure for $\phi$ with respect to  $X^t$, then the equilibrium measure $\nu_{\ell(\phi-P_X(\phi)),Z}$ for $\ell(\phi-P_X(\phi))$ with respect to  $Z^t$ is given by 
	$$
	\nu_{\ell(\phi-P_X(\phi)),Z}=\frac{k}{\int k \nu_{\phi,X}}\nu_{\phi,X}
	$$
\end{proposition}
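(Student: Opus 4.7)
The plan is to use the variational principle in combination with Abramov's entropy formula and the standard bijection between $X^t$-invariant and $Z^t$-invariant probability measures induced by reparametrization. Since $\ell$ is smooth and positive, the map $\mu\mapsto \mu_k = \frac{k}{\int k\,d\mu}\mu$ is a bijection from $\eM(X)$ onto $\eM(Z)$, with inverse $\nu\mapsto\frac{\ell}{\int\ell\,d\nu}\nu$; in particular, every $Z^t$-invariant probability measure arises this way for a unique $X^t$-invariant $\mu$.

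The core computation is then a direct substitution. For any $\nu = \mu_k\in\eM(Z)$, using Abramov's formula $h_{\mu_k}(Z) = h_\mu(X)/\int k\,d\mu$ recalled just before the statement and the identity $\ell k\equiv 1$, I would compute
\begin{align*}
h_\nu(Z) + \int \ell(\phi - P_X(\phi))\,d\nu
&= \frac{h_\mu(X)}{\int k\,d\mu} + \frac{1}{\int k\,d\mu}\int \ell(\phi - P_X(\phi))\,k\,d\mu \\
&= \frac{1}{\int k\,d\mu}\Bigl( h_\mu(X) + \int\phi\,d\mu - P_X(\phi)\Bigr).
\end{align*}
Since $k>0$ is bounded away from $0$ and $\infty$ on the compact manifold $M$, the factor $1/\int k\,d\mu$ is a finite positive constant that depends on $\mu$ but is always positive.

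By the variational principle applied to $X^t$, the parenthesized expression is $\leq 0$ for every $\mu\in\eM(X)$, and equals $0$ precisely when $\mu = \nu_{\phi,X}$, which is the unique equilibrium state (since $X^t$ is a transitive Anosov flow and $\phi$ is H\"older continuous). Taking the supremum over $\nu\in\eM(Z)$, equivalently over $\mu\in\eM(X)$, immediately gives $P_Z(\ell(\phi - P_X(\phi))) = 0$, and the supremum is attained exactly at $\nu = (\nu_{\phi,X})_k = \frac{k}{\int k\,d\nu_{\phi,X}}\,\nu_{\phi,X}$. This measure is therefore the equilibrium state for $\ell(\phi - P_X(\phi))$ with respect to $Z^t$, proving the second claim and completing the proof.

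There is no genuine obstacle here; the only things to verify carefully are the bijection $\mu\leftrightarrow\mu_k$ between $\eM(X)$ and $\eM(Z)$ (which is standard and already implicit in the paragraph preceding the statement), and the fact that uniqueness of the equilibrium state for $X^t$ transfers to uniqueness under the bijection, so that $(\nu_{\phi,X})_k$ is genuinely the unique maximizer on the $Z^t$ side.
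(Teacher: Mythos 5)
Your proof is correct and follows essentially the same route as the paper: the same substitution via the bijection $\mu\mapsto\mu_k$, the same application of Abramov's formula, and the same reduction to the variational principle for $X^t$ with uniqueness of the equilibrium state transferring through the bijection. No issues.
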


\begin{proof}

	Take any $X$-invariant measure $\nu$ and let $\hat\nu=\frac{k}{\int kd\nu}\nu$. Then
	
	\begin{eqnarray*}
	h_{\hat\nu}(Z)+\int\ell(\phi-P_X(\phi))d\hat\nu&=&\frac{1}{\int kd\nu}\left(h_\nu(X)+\int(\phi-P_X(\phi))d\nu\right)\\
	&=&\frac{1}{\int kd\nu}\left(\left(h_\nu(X)+\int\phi d\nu\right)-P_X(\phi)\right)\leq 0
	\end{eqnarray*}
Further, the equality in the above inequality holds if and only if $\nu=\nu_{\phi,X}$. Since the correspondence $\nu\mapsto\hat\nu$ is a one-to-one and onto correspondence between invariant measures for $X$ and $Z$, uniqueness of equilibrium measures yields the posited result.
\end{proof}



Now let $X^t\colon M\to M$ be a homologically full Anosov flow. Recall the definition of $\beta$-functional, $\beta([\theta])=P_X(\theta(X))$, and Sharp's minimizer $\xi_X$ from Section~2.
Denote by $\mu_\xi$ the equilibrium state of $\xi_X(X)$.  
Also recall that by Step~1 of the proof of~\cite[Theorem 1]{Sh}
\begin{equation}
\label{eq_form_zero}
\int_M\theta(X)d\mu_\xi=0
\end{equation}
 for any closed $1-$form $\theta$. It follows that 
 \begin{equation}
 \label{eq_pressure}
 P_X(\xi_X(X))=h_{\mu_\xi}(X)
 \end{equation}

\begin{proposition}\label{repbyform}
	Let $X^t$ be a homologically full Anosov flow and let $Z^t$ be a reparametrization given by $Z=\ell X$ where $\ell={1}/({a+\omega(X)})$ for some positive constant $a$ and some smooth closed $1-$form $\omega$  with $a+\omega(X)>0$. Then 
	$$
	\xi_Z=\xi_X+\frac{P_X(\xi_X(X))}{a}[\omega]
	$$
\end{proposition}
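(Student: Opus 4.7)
I would aim to show that the candidate class $[\eta] := \xi_X + \lambda[\omega]$, with $\lambda := P_X(\xi_X(X))/a$, minimizes $\beta_Z$, and then invoke the uniqueness of Sharp's minimizer in Theorem~\ref{thm_sharp}(iii) applied to $Z^t$. First one notes that Theorem~\ref{thm_sharp} is available for $Z^t$: $Z^t$ is Anosov by Proposition~\ref{prop3.2}, and since its periodic orbits coincide as subsets of $M$ with those of $X^t$, they realize the same homology classes, so $Z^t$ is also homologically full.

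The key computation I would perform is to rewrite the potential $\theta(Z)$ for the closed representative $\theta = \xi_X + \lambda\omega$. Using $\omega(X) = 1/\ell - a$, so that $\lambda\ell\omega(X) = \lambda(1-a\ell)$, and using $\lambda a = P_X(\xi_X(X))$, one finds
$$
\theta(Z) \;=\; \ell\theta(X) \;=\; \ell\xi_X(X) + \lambda(1-a\ell) \;=\; \ell\bigl(\xi_X(X) - P_X(\xi_X(X))\bigr) + \lambda.
$$
Proposition~\ref{pressureforreparametrization} applied to $\phi = \xi_X(X)$ then gives $P_Z(\ell(\xi_X(X)-P_X(\xi_X(X)))) = 0$, and adding the constant $\lambda$ yields $\beta_Z([\eta]) = P_Z(\theta(Z)) = \lambda$. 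Moreover, the same proposition identifies the $Z^t$-equilibrium state of $\theta(Z)$ (which differs from $\ell(\xi_X(X)-P_X(\xi_X(X)))$ by a constant) as $\hat\mu_\xi = k\mu_\xi/\int k\, d\mu_\xi = k\mu_\xi/a$, where $\mu_\xi$ is the $X^t$-equilibrium state of $\xi_X(X)$ and the normalization $\int k\, d\mu_\xi = a$ follows from~(\ref{eq_form_zero}).

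To confirm that $[\eta]$ is in fact the global minimum of $\beta_Z$, I would plug the test measure $\hat\mu_\xi$ into the variational principle for $Z^t$ with any closed representative $\theta'$ of an arbitrary class $[\theta']\in H^1(M,\R)$. Using the Abramov-type rearrangement from the proof of Proposition~\ref{pressureforreparametrization},
$$
\beta_Z([\theta']) \;\geq\; h_{\hat\mu_\xi}(Z) + \int\theta'(Z)\, d\hat\mu_\xi \;=\; \frac{h_{\mu_\xi}(X) + \int\theta'(X)\, d\mu_\xi}{a} \;=\; \frac{P_X(\xi_X(X))}{a} \;=\; \lambda,
$$
where the penultimate equality uses~(\ref{eq_pressure}) for the entropy term and~(\ref{eq_form_zero}) applied to the closed $1$-form $\theta'$ to kill the integral term. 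Combined with $\beta_Z([\eta]) = \lambda$, this shows $[\eta]$ is a global minimum; uniqueness in Theorem~\ref{thm_sharp}(iii) then gives $\xi_Z = [\eta]$.

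I do not expect a significant obstacle: the argument is essentially bookkeeping combining Proposition~\ref{pressureforreparametrization} with the two defining properties~(\ref{eq_form_zero}) and~(\ref{eq_pressure}) of $\mu_\xi$. The only subtle point is that~(\ref{eq_form_zero}) must apply to \emph{every} closed $1$-form so that the lower bound runs over all of $H^1(M,\R)$ and pins down the minimizer rather than merely a critical point.
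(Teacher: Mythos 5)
Your proof is correct. The first half — choosing the representative $\theta=\xi_X+\lambda\omega$ and showing $\theta(Z)=\ell\bigl(\xi_X(X)-P_X(\xi_X(X))\bigr)+\lambda$, hence $\beta_Z([\eta])=\lambda$ via Proposition~\ref{pressureforreparametrization} — is essentially the same algebraic identity the paper uses (the paper writes it as $\ell(\theta(X)-P_X(\theta(X)))=\theta(Z)+\tfrac{\omega(Z)}{a}P_X(\theta(X))-\tfrac{1}{a}P_X(\theta(X))$ for general $\theta$). Where you genuinely diverge is in certifying global minimality. The paper keeps the identity $P_Z(\hat\theta(Z))=\tfrac1a P_X(\theta(X))$ for \emph{all} classes $\theta$, observes that $\theta\mapsto\hat\theta=\theta+\tfrac{P_X(\theta(X))}{a}\omega$ is a bijection of $H^1(M,\R)$ (with inverse $\hat\theta\mapsto\hat\theta-P_Z(\hat\theta(Z))\omega$), and transports the minimizer of $\beta_X$ to the minimizer of $\beta_Z$. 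You instead prove the uniform lower bound $\beta_Z([\theta'])\ge\lambda$ by inserting the single test measure $\hat\mu_\xi=k\mu_\xi/a$ into the variational principle and using~(\ref{eq_form_zero}) and~(\ref{eq_pressure}). Your route buys a more self-contained verification: you never need to check that the nonlinear change of variables on cohomology is onto (the paper's inverse formula only directly shows one composition is the identity), and you only use $\hat\mu_\xi$ as a test measure, not as an equilibrium state. The paper's route buys the stronger structural fact that $\beta_Z$ and $\beta_X$ are globally intertwined by an explicit bijection, which is more information than the location of the minimum. Your preliminary remark that $Z^t$ is homologically full (so that uniqueness in Theorem~\ref{thm_sharp}(iii) applies) is a point the paper leaves implicit, and you handle it correctly since reparametrization preserves orbits and hence the homology classes of periodic orbits.
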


\begin{proof}
	By Proposition \ref{pressureforreparametrization} we have $P_Z(\ell(\theta(X)-P_X(\theta(X)))=0$ for every $1-$form $\theta$. Applying $\omega$ to  
	$$
	Z=\frac{X}{a+\omega(X)}
	$$
	yields 
	$$
	\omega(Z)=\frac{\omega(X)}{a+\omega(X)}
	$$
	 and hence 
	 $$
	 \ell=\frac{1}{a+\omega(X)}=\frac{1}{a}(1-\omega(Z))
	 $$ 
	Then
	$$
	\ell(\theta(X)-P_X(\theta(X)))=\theta(Z)+\frac{\omega(Z)}{a}P_X(\theta(X))-\frac{1}{a}P_X(\theta(X))
$$
and we obtain	
	$$
	P_Z\left(\theta(Z)+\frac{\omega(Z)}{a}P_X(\theta(X))-\frac{1}{a}P_X(\theta(X))\right)=0
	$$ 
	or
	$$
	P_Z\left(\theta(Z)+\frac{\omega(Z)}{a}P_X(\theta(X))\right)=\frac{1}{a}P_X(\theta(X))
	$$ 
	 for every $\theta$. 
	
	We use the above formula to conclude that the map  
	$$\theta\mapsto \theta+\frac{P_X(\theta(X))}{a}\omega$$
	 is an invertible bijection on cohomology $H^1(M,\R)$. Indeed the inverse is given by $\theta\mapsto \theta-P_Z(\theta(Z))\omega$.  Hence, if $\theta$ minimizes 
	$$
	\theta\mapsto P_X(\theta(X))
	$$ 
	then 
	$$
	\hat\theta=\theta+\frac{P_X(\theta(X))}{a}\omega
	$$
	 minimizes $P_Z(\hat\theta(Z))$. Hence, by uniqueness of Sharp's minimizer.
	$$
	\xi_Z=\xi_X+\frac{P_X(\xi_X(X))}{a}[\omega]
	$$ 
\end{proof}

Finally we use Proposition~\ref{pressureforreparametrization} to show that any homologically full Anosov flow can be reparametrized so that Sharp's minimizer becomes zero. Sharp proved that such flows are special in the sense that periodic orbit growth is balanced in different homology classes~\cite[Section 5]{Sh}. We will need the following lemma.

\begin{lemma}
\label{lemma_stand}
Assume that for a cohomology class $\mu$ and each $X^t$-invariant probability measure $\nu$ we have $\int_M\mu(X)d\nu>-1$. Then $\mu$ can be represented by 1-form $\omega$ such that $\omega(X)>-1$. 
\end{lemma}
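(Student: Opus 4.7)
The plan is to reduce the problem to constructing a smooth transfer function and then invoke an averaging trick. Pick any smooth closed 1-form $\omega_0$ with $[\omega_0]=\mu$, and set $\phi=1+\omega_0(X)\in C^\infty(M)$. The hypothesis becomes $\int_M\phi\,d\nu>0$ for every $\nu\in\eM(X)$, while the conclusion will follow as soon as we produce $f\in C^\infty(M)$ with $\phi+L_Xf>0$ on $M$: then $\omega\defin\omega_0+df$ is a smooth closed representative of $\mu$ satisfying $1+\omega(X)=\phi+L_Xf>0$.

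First I would establish uniform positivity of Birkhoff averages. Since $\eM(X)$ is weak-$*$ compact and $\nu\mapsto\int_M\phi\,d\nu$ is weak-$*$ continuous, the infimum
$$
c\defin\inf_{\nu\in\eM(X)}\int_M\phi\,d\nu
$$
is attained and is strictly positive. I claim there exists $T_0>0$ such that for every $x\in M$ and every $T\geq T_0$,
$$
\Phi_T(x)\defin\int_0^T\phi(X^sx)\,ds\;\geq\;\frac{c}{2}\,T.
$$
Indeed, if this uniform bound failed one could choose $x_n\in M$ and $T_n\to\infty$ with $\Phi_{T_n}(x_n)/T_n<c/2$. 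The empirical measures $\nu_n\defin T_n^{-1}\int_0^{T_n}\delta_{X^sx_n}\,ds$ would then satisfy $\int\phi\,d\nu_n<c/2$, but by the standard Krylov--Bogolyubov argument any weak-$*$ accumulation point $\nu$ of $\{\nu_n\}$ lies in $\eM(X)$ and would satisfy $\int\phi\,d\nu\leq c/2<c$, contradicting the definition of $c$.

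Next I would define the transfer function by the Ces\`aro-type average
$$
f(x)\defin\frac{1}{T_0}\int_0^{T_0}\Phi_t(x)\,dt,
$$
which is smooth since $\phi$ is. Using the cocycle identity $\Phi_t(X^\tau x)=\Phi_{t+\tau}(x)-\Phi_\tau(x)$ and differentiating at $\tau=0$, one finds
$$
L_Xf(x)=\frac{\Phi_{T_0}(x)}{T_0}-\phi(x),
$$
so that
$$
\phi(x)+L_Xf(x)=\frac{\Phi_{T_0}(x)}{T_0}\geq\frac{c}{2}>0.
$$
Setting $\omega=\omega_0+df$ then produces the required smooth closed 1-form representative of $\mu$ with $\omega(X)>-1$ pointwise on $M$.

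The whole argument is a Mather/Ma\~n\'e-style averaging construction. The only genuine difficulty lies in the uniform Birkhoff estimate of the first step, where one must upgrade the measure-theoretic strict positivity $\int\phi\,d\nu>0$ to a uniform-in-$x$ pointwise bound on long-time averages; once that is secured, the formula for $f$ and the final cohomological adjustment are routine.
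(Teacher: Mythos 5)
Your proof is correct and is essentially the paper's own argument: the paper averages the pullback forms $\omega^\lambda=\frac1\lambda\int_0^\lambda (X^t)^*\omega^0\,dt$, which is exactly your $\omega_0+df$ (your Ces\`aro-averaged transfer function $f$ differs from the primitive of $\omega^\lambda-\omega^0$ only by a constant), and the uniform lower bound on Birkhoff averages is precisely the ``unique ergodicity--style'' compactness argument the paper alludes to but does not write out. You have simply supplied the details the paper leaves as standard.
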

The proof is very standard  and is similar to the proof of uniform convergence of Birkhoff ergodic averages for uniquely ergodic systems. We just indicate the approach.

Fix a closed 1-form $\omega^0$ which represents $\mu$.
For any $\lambda >0$ let
$$
\omega^\lambda_x=\frac1\lambda\int_x^{X^\lambda(x)}(X^t)^*\omega^0_{X^t(x)}dt
$$
We have $[\omega^\lambda]=\mu$ and $\omega_x^\lambda(X)$ is given by the ergodic average of $\omega^0(X)$ 
$$\omega_x^\lambda(X)=\frac1\lambda\int_x^{X^\lambda(x)}\omega^0_{X^t(x)}(X(X^t(x))dt$$
Then the condition on the integrals of $\omega(X)$ implies that 
$\omega^\lambda(X)>-1$
for a sufficiently large $\lambda$.

\begin{proposition}
\label{prop_minimizer}
Let $X^t\colon M\to M$ be a homologically full Anosov flow. Then there is a unique (up to conjugacy) reparametrization of the form
$$
Z=\frac{X}{1+\omega(X)}
$$
which has zero Sharp's minimizer. Here $\omega$ is a closed $1$-form.
\end{proposition}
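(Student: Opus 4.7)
The plan is to apply Proposition~\ref{repbyform} in reverse: given the requirement $\xi_Z = 0$, solve for $[\omega]$, and then find a representative of that cohomology class satisfying the positivity condition $1 + \omega(X) > 0$. Setting $a = 1$ in Proposition~\ref{repbyform}, the equation $\xi_Z = 0$ forces
$$[\omega] = -\frac{1}{P_X(\xi_X(X))}\,\xi_X \in H^1(M,\R).$$
Note that $P_X(\xi_X(X)) = h_{\mu_\xi}(X) > 0$ by (\ref{eq_pressure}) together with positivity of the entropy of $\mu_\xi$, which is a fully supported Gibbs state of a H\"older potential on a transitive Anosov flow.

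Next, I must realize this cohomology class by a closed 1-form $\omega$ with $1 + \omega(X) > 0$ pointwise. By Lemma~\ref{lemma_stand} applied to the class $-\xi_X/P_X(\xi_X(X))$, it suffices to show
$$\int_M \xi_X(X)\,d\nu < P_X(\xi_X(X))$$
strictly for every $X^t$-invariant probability measure $\nu$. The variational principle gives $\int \xi_X(X)\,d\nu \le P_X(\xi_X(X)) - h_\nu(X)$; if equality held, then $\nu$ would attain the supremum in the pressure, so by uniqueness of the equilibrium state of a H\"older potential for a transitive Anosov flow we would have $\nu = \mu_\xi$ and $h_\nu(X) = 0$. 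But $\int \xi_X(X)\,d\mu_\xi = 0$ by (\ref{eq_form_zero}), whereas $P_X(\xi_X(X)) = h_{\mu_\xi}(X) > 0$, a contradiction. Thus the strict inequality holds for all $\nu$, and Lemma~\ref{lemma_stand} yields the desired representative $\omega$. By Proposition~\ref{prop3.2} the resulting $Z = X/(1+\omega(X))$ is an Anosov flow, and Proposition~\ref{repbyform} then gives $\xi_Z = 0$ by construction.

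For uniqueness, suppose $\omega_1$ and $\omega_2$ each produce a reparametrization of the stated form with vanishing Sharp minimizer. Proposition~\ref{repbyform} forces $[\omega_1] = [\omega_2]$, so $\omega_1 - \omega_2 = df$ for some smooth $f \colon M \to \R$. Writing $\ell_i = 1/(1+\omega_i(X))$, one computes
$$\frac{1}{\ell_1} - \frac{1}{\ell_2} = \omega_1(X) - \omega_2(X) = L_X f,$$
so by Lemma~\ref{lemma_repar} the map $X^f$ conjugates the two reparametrizations, which is exactly uniqueness up to conjugacy. The one genuinely nontrivial step is the strict inequality invoking Lemma~\ref{lemma_stand}; the rest is bookkeeping with the reparametrization formulas assembled earlier in this section.
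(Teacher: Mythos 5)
Your proof is correct and follows essentially the same route as the paper's: determine $[\omega]=-\xi_X/P_X(\xi_X(X))$ from Proposition~\ref{repbyform} with $a=1$, then realize this class by a form with $\omega(X)>-1$ via Lemma~\ref{lemma_stand}, using~(\ref{eq_form_zero}) and~(\ref{eq_pressure}). The only differences are cosmetic and welcome: you verify the strict inequality $\int_M\xi_X(X)\,d\nu<P_X(\xi_X(X))$ by contradiction from the variational principle and uniqueness of the equilibrium state, rather than by the paper's direct chain of inequalities, and you spell out the uniqueness-up-to-conjugacy step via Lemma~\ref{lemma_repar}, which the paper leaves implicit.
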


\begin{proof}
By Proposition~\ref{pressureforreparametrization} we have $\xi_Z=\xi_X+P_X(\xi_X(X))[\omega]$ and hence cohomology class $[\omega]$ is uniquely determined
$$
[\omega]=\frac{-1}{P_X(\xi_X(X))}\xi_X
$$
(Recall that by~(\ref{eq_pressure}) $P_X(\xi_X(X))=h_{\mu_\xi}(X)>0$.)

Thus it remains to show that the class $[\omega]$ can be realized by a 1-form $\omega$ with $\omega(X)>-1$. According to Lemma~\ref{lemma_stand} it is enough to show that
$$
\int_M\omega(X)d\nu>-1
$$
for every $X^t$-invariant probability measure $\nu$. If $\nu=\mu_\xi$ then 
$$
\int_M\omega(X)d\nu=0
$$
by~(\ref{eq_form_zero}).  Otherwise, if  $\nu\neq\mu_\xi$ then
$$
\int_M\omega(X)d\nu=\frac{-1}{P_X(\xi_X(X))}\int_M\xi_X(X)d\nu>\frac{-\int_M\xi_X(X)d\nu}{h_{\nu}(X)+\int_M\xi_X(X)d\nu}\ge -1
$$
\end{proof}



\section{Conjugacy for homologically full flows.}\label{sectionfullflows}

Recall that a transitive Anosov flow $X^t\colon M\to M$ is {\it homologically full} if every integral homology class contains a periodic orbit of $X^t$. Notice that being homologically full is a property which is invariant under any orbit equivalence.

Two flows $X_i^t\colon M\to M$, $i=1,2$ are {\it conjugate} if there exists a homeomorphism $H\colon M\to M$ such that
$$
\forall t\,\,\,\,\,\,\, H\circ X^t=Y^t\circ H
$$
We say that $X_1^t$ and $X_2^t$ are {\it orbit equivalent} if there exists a homeomorphism $H$ which send orbits of $X_1^t$ to to orbits of $X_2^t$ preserving the time direction.

Let $X_i^t\colon M\to M$, $i=1,2$, be orbit equivalent Anosov flows. Fix an orbit equivalence $H_0\colon M\to M$ which sends orbits of $X_1^t$ to orbits of $X_2^t$. We say that $H_0$ {\it matches period spectra} if for every periodic point $x$ the $X_1^t$-period of $x$ is the same as $X_2^t$-period of $H_0(x)$. And we say that $H_0$ {\it matches homologically trivial period spectra} if only the periods of corresponding homologically trivial periodic orbits are assumed to be the same. Note that matching is not merely a property $X_1^t$ and $X_2^t$, but also depends on the choice of $H_0$ because flows can admit multiple non-equivalent orbit equivalences.

Recall the following classical application of the Livshits Theorem due to Katok.

\begin{theorem}\label{rigidityflows}
Let $X_1^t$ and $X_2^t$ be transitive Anosov flows and let $H_0$ be an orbit equivalence which matches period spectra. Then $X_1^t$ and $X_2^t$ are conjugate
$$
H\circ X_1^t=X_2^t \circ H
$$
where $H\colon M\to M$ is a bi-H\"older continuous homeomorphism.
\end{theorem}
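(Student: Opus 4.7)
The plan is to view the orbit equivalence $H_0$ as identifying $X_2^t$ with a topological reparametrization of $X_1^t$, observe that matching of period spectra forces the associated time-change cocycle to be a Hölder coboundary over $X_1^t$ by Livshits' theorem, and then absorb the resulting coboundary into a flow-direction correction of $H_0$.

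Concretely, since $H_0$ sends $X_1^t$-orbits to $X_2^t$-orbits preserving the time direction, the equation
$$
H_0(X_1^t x) = X_2^{\rho(x,t)}(H_0 x)
$$
determines a continuous additive cocycle $\rho \colon M\times\R\to\R$ strictly increasing in $t$. An orbit equivalence between transitive Anosov flows is automatically Hölder (a standard consequence of local product structure and the hyperbolic splitting), so $\rho$ is Hölder in $x$ and, by the smoothness of $X_2^t$ along orbits, is smooth in $t$; hence $\ell(x) := \partial_t\rho(x,0)$ is a positive Hölder function. On a periodic orbit $\gamma$ through $p$ of $X_1^t$-period $T$, the matching hypothesis says $H_0(\gamma)$ has $X_2^t$-period also $T$, so $\rho(p,T) = T$ and therefore
$$
\int_\gamma (\ell-1)\,ds = \rho(p,T) - T = 0.
$$

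The classical Livshits theorem applied to the transitive Anosov flow $X_1^t$ and the Hölder function $\ell-1$ produces a Hölder $u \colon M \to \R$, continuously differentiable along $X_1$, with $L_{X_1} u = \ell-1$. Integrating along orbits gives $\rho(x,t) - t = u(X_1^t x) - u(x)$. Define
$$
H(x) := X_2^{-u(x)}\bigl(H_0(x)\bigr).
$$
A direct computation using the cocycle identity then yields
$$
H(X_1^t x) = X_2^{\rho(x,t) - u(X_1^t x)}(H_0 x) = X_2^{t - u(x)}(H_0 x) = X_2^t\bigl(H(x)\bigr),
$$
so $H$ is the desired conjugacy. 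Bi-Hölder regularity of $H$ follows from bi-Hölder regularity of $H_0$ combined with Hölder regularity of $u$ and smoothness of $X_2^t$.

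The main obstacle is the regularity bookkeeping: one must verify that the orbit equivalence $H_0$ is Hölder, so that the cocycle $\rho$, and hence the infinitesimal rate $\ell$, is Hölder before the classical Livshits theorem may be invoked. Once this standard structural-stability fact is in hand, the remainder of the argument reduces to a one-line application of Livshits followed by the cocycle identity verification above.
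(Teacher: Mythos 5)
Your proposal is correct and follows essentially the same route as the paper: interpret the orbit equivalence as a time-change cocycle, note that matched periods make its periodic-orbit integrals vanish, apply the classical Livshits theorem to get a transfer function $u$, and correct $H_0$ by the time-$u$ flow shift (the paper phrases this as conjugating $Z^t=H_0\circ X_1^t\circ H_0^{-1}$, a reparametrization of $X_2^t$, back to $X_2^t$, which is the same computation on the other side). One small inaccuracy: $\rho(x,t)$ is not automatically differentiable in $t$ just because $X_2^t$ is smooth along orbits --- an arbitrary orbit equivalence restricted to an orbit is only a homeomorphism --- so before defining $\ell=\partial_t\rho(\cdot,0)$ you must first adjust $H_0$ in the flow direction to make it $C^1$ along orbits (as the paper does), or else apply Livshits directly to the additive cocycle $\rho(x,t)-t$ without differentiating.
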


\begin{proof}
The orbit equivalence $H_0$ is a bi-H\"older homeomorphism. By adjusting in the time direction we can also make $H_0$ continuously differentiable in the flow direction. Define
$$
Z^t=H_0\circ X_1^t\circ H_0^{-1}
$$
Then $Z^t$ is a H\"older continuous reparamerization of $X_2^t$ with the same periods. 

We use the same notation as in Section~\ref{sec_rep}: $Z=\ell X_2$, $Z^t=X_2^{\tau_t}$, $X_2^t=Z^{\kappa_t}$.  For any periodic point $x$ of period $T$ we have
$$
x=Z^T(x)=X_2^T(x)=Z^{\kappa_T}(x)
$$
and, hence, $\kappa_T(x)=T$ or
$$
\int_0^T\left(\frac1{\ell(X_2^s(x))}-1\right) ds=0
$$
Now, by Livshits Theorem there exists a H\"older function $u\colon M\to\R$ which is continuously differentiable along $X_2$ such that $L_{X_2}u=\frac1\ell-1$.
We conclude from Lemma~\ref{lemma_repar} that $Z^t$ and $X_2^t$ are bi-H\"older conjugate.
\end{proof}

Recall from Section~3 that $\xi_X\in H^1(M,\R)$ denotes the Sharp's minimizer for a homologically full Anosov flow $X^t\colon M\to M$.
\begin{theorem}\label{rigidityforfullflows}
Let $X_1^t\colon M\to M$ be a homologically full Anosov flow. Assume that  $X_2^t\colon M\to M$ is another Anosov flow which is orbit equivalent to $X_1^t$ via $H_0$. Assume that $H_0$ matches homologically trivial period spectra. Then  $X_1^t$ is conjugate to the reparametrization of $X_2^t$ generated by
$$
\frac{X_2}{1+\omega(X_2)}
$$
 where $\omega$ is a smooth closed $1-$form. If, moreover,  $H_0^*\xi_{X_2}=\xi_{X_1}$ then $\omega$ can be chosen to be zero and, hence, $X_1^t$ and $X_2^t$ are conjugate.
\end{theorem}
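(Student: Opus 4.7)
The plan is to parallel Katok's classical conjugacy argument (Theorem~\ref{rigidityflows}), replacing the usual Livshits Theorem by the abelian Livshits Theorem~\ref{livsic}. Put $Z^t=H_0\circ X_1^t\circ H_0^{-1}$; this is a H\"older reparametrization of $X_2^t$, and we write $Z=\ell X_2$, $k=1/\ell$, $X_2^t=Z^{\kappa_t}$. Since $H_0$ is a homeomorphism, $X_2^t$ is also homologically full. The matching of homologically trivial period spectra, combined with the cocycle identity from the proof of Theorem~\ref{rigidityflows}, gives
$$\int_0^T\bigl(k(X_2^s(x))-1\bigr)\,ds=0$$
for every $x$ on a homologically trivial periodic orbit of $X_2^t$ of period $T$.

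Apply Theorem~\ref{livsic} to the H\"older function $k-1$ on the homologically full flow $X_2^t$: there exist a smooth closed $1$-form $\tilde\omega$ and a H\"older function $u$, continuously differentiable along $X_2$, with $k-1=\tilde\omega(X_2)+L_{X_2}u$. To make the target reparametrization well-defined, a representative of $[\tilde\omega]$ satisfying $\omega(X_2)>-1$ is needed. For any $X_2^t$-invariant probability measure $\nu$,
$$\int\tilde\omega(X_2)\,d\nu=\int(k-1)\,d\nu>-1,$$
since $k>0$; hence Lemma~\ref{lemma_stand} yields such an $\omega$, and the exact correction can be absorbed into $u$. Setting $Y=X_2/(1+\omega(X_2))$, which is Anosov by Proposition~\ref{prop3.2}, we have
$$\frac{1}{\ell}-\bigl(1+\omega(X_2)\bigr)=(k-1)-\omega(X_2)=L_{X_2}u,$$
so Lemma~\ref{lemma_repar} shows $Z^t$ is conjugate to $Y^t$ via $X_2^u$. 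Therefore $X_1^t$ is conjugate to $Y^t$ via $H:=X_2^u\circ H_0$, proving the first assertion.

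For the second assertion, suppose $H_0^*\xi_{X_2}=\xi_{X_1}$. Since $X_2^u$ is isotopic to the identity (through $X_2^{tu}$, $t\in[0,1]$), the conjugacy $H$ acts on $H^1(M,\R)$ as $H_0^*$. Conjugacies transport Sharp's minimizer, so $\xi_{X_1}=H^*\xi_Y=H_0^*\xi_Y$. By Proposition~\ref{repbyform} applied with $a=1$,
$$\xi_Y=\xi_{X_2}+P_{X_2}(\xi_{X_2}(X_2))\,[\omega].$$
Combining these identities with the hypothesis gives $P_{X_2}(\xi_{X_2}(X_2))\,H_0^*[\omega]=0$. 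Since $P_{X_2}(\xi_{X_2}(X_2))=h_{\mu_\xi}(X_2)>0$ by~(\ref{eq_pressure}) and $H_0^*$ is an isomorphism on cohomology, $[\omega]=0$. Writing $\omega=dg$ for smooth $g$ and replacing $u$ by $u+g$ produces $k-1=L_{X_2}(u+g)$; Lemma~\ref{lemma_repar} then yields that $Z^t$, and hence $X_1^t$, is conjugate to $X_2^t$.

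The main technical subtlety is the use of Lemma~\ref{lemma_stand} to realize the cohomology class coming out of Theorem~\ref{livsic} by a $1$-form with $1+\omega(X_2)>0$; without this step the candidate flow $Y$ would not be a bona fide reparametrization and Proposition~\ref{repbyform} could not be applied. Everything else is bookkeeping with the tools developed in Sections~3 and~4.
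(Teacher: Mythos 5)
Your proof is correct and follows the same overall route as the paper: conjugate $X_1^t$ to the reparametrization $Z=\ell X_2$ via $H_0$, observe that matching of homologically trivial period spectra makes $k-1$ integrate to zero over homologically trivial $X_2$-orbits, apply Theorem~\ref{livsic}, and then use Lemma~\ref{lemma_repar}; the second assertion is handled in both cases by transporting Sharp's minimizer and invoking Proposition~\ref{repbyform} with $a=1$. The one place where you genuinely diverge is the step guaranteeing $1+\omega(X_2)>0$: the paper approximates $L_{X_2}u$ by $L_{X_2}u'$ with $u'$ smooth and absorbs $du'$ into $\omega$ so that $1+\omega(X_2)=\tfrac1\ell-L_{X_2}(u-u')$ stays positive, whereas you note that $\int\tilde\omega(X_2)\,d\nu=\int(k-1)\,d\nu>-1$ for every invariant $\nu$ (since $\int L_{X_2}u\,d\nu=0$) and invoke Lemma~\ref{lemma_stand}. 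Your variant is arguably cleaner: it reuses a lemma the paper already proves for Proposition~\ref{prop_minimizer} and sidesteps the (unaddressed in the paper) question of uniformly approximating $L_{X_2}u$ by derivatives of smooth functions, at the cost of the flow-averaging argument hidden inside Lemma~\ref{lemma_stand}. Your explicit remark that $X_2^u$ is isotopic to the identity, so that the full conjugacy acts on $H^1(M,\R)$ as $H_0^*$, is also a small but welcome clarification of a point the paper leaves implicit.
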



\begin{proof}
The proof proceeds in exactly the same way as the proof of Theorem~\ref{rigidityflows}, but instead of applying the classical Livshits Theorem we apply Theorem~\ref{livsic} and obtain a function $u\colon M\to\R$ and a smooth closed $1-$form $\omega$ such that
$$
L_{X_2}u=\frac1\ell-1-\omega(X_2)
$$
Note that we can approximate $L_{X_2}u$ with $L_{X_2}u'$, where $u'$ is smooth. Then, after replacing $\omega$ with $\omega-du'$ we have (cf. Remark~\ref{rmk_liv})
$$
L_{X_2}(u-u')=\frac1\ell-1-\omega(X_2)
$$
and for a sufficiently small $L_{X_2}(u-u')$ we have $1+\omega(X_2)>0$.
 Then by Lemma~\ref{lemma_repar} flow $Z^t$ (and hence $X_1^t$) is conjugate to the reparametrization of $X_2^t$ generated by 
$X_2/(1+\omega(X_2))$. This gives us the first part of the theorem. 

Hence without loss of generality we can (and do) assume that
 $$
 Z=\frac{1}{1+\omega(X_2)}X_2,
 $$ 

It is left to check that $\omega$ is exact if the cohomology classes $\xi_{X_1}$  and $\xi_{X_2}$ match. 

Because $Z^t$ is conjugate to $X_1^t$ we have $\xi_{X_1}=H_0^*\xi_Z$ and hence, by the assumption of the theorem, $\xi_Z=\xi_{X_2}$.
Recall that  by~(\ref{eq_pressure}) we have $P_{X_2}(\xi_{X_2}(X_2))=h_{\mu_{X_2}}(X_2)>0$. Thus applying Proposition~\ref{repbyform} with $a=1$ and $X=X_2$ we obtain that $[\omega]=0$, that is, $\omega$ is exact. Hence, by Lemma~\ref{lemma_repar}, flows $Z^t$ and $X_2^t$ are conjugate.
\end{proof}


\section{Sharpened Marked Length Spectrum Rigidity}\label{sharpotal}

Here we explain that our abelian Livshits theory can be used to improve marked length spectrum rigidity results on surfaces and higher dimensional manifolds. Recall that Croke~\cite{Cr} and Otal~\cite{Ot} famously proved that marked lengths of closed geodesics determine the isometry class of a negatively curved surface. We offer the following enhancement. 

Given a negatively curved surface $(S,g)$, a free homotopy class of loops $\alpha$ on $S$ admits a unique geodesic representative. Denote by $\ell_g(\alpha)$ the length of this geodesic.
\begin{theorem}
\label{thm_sharpenedMLS}
	Let $g_1$ and $g_2$ be two negatively curved metrics on a smooth compact surface $S$. Given a free homotopy class of loops $\alpha$ on $S$, denote by $\ell_{g_i}(\alpha)$ the length of the $g_i$-geodesic representative of $\alpha$, $i=1,2$. Assume that their marked length spectra are the same for homologically trivial geodesics, \ie  $\ell_{g_1}(\alpha)=\ell_{g_2}(\alpha)$ for every homologically trivial free homotopy class of loops $\alpha$. Then $g_1$ and $g_2$ are isometric. 
\end{theorem}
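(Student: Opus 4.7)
The plan is to reduce to Croke--Otal by upgrading the hypothesis to matching of full marked length spectra. Let $X_i^t\colon T^1S\to T^1S$ denote the $g_i$-geodesic flow; both are transitive contact Anosov flows and therefore homologically full by Theorem~\ref{full}. For any two negatively curved metrics on the compact surface $S$, there is a classical orbit equivalence $H_0\colon T^1S\to T^1S$ between $X_1^t$ and $X_2^t$ that preserves free homotopy classes of closed orbits (produced, for instance, by identifying geodesics on the universal cover via their endpoint pairs on $\partial_\infty\tilde S$, which is a topological invariant). Under $H_0$, the closed $g_1$-geodesic in a class $\alpha$ maps to the closed $g_2$-geodesic in the same class, so the hypothesis of the theorem translates directly to the statement that $H_0$ matches homologically trivial period spectra.

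The next step is to show that $\xi_{X_i}=0\in H^1(T^1S,\R)$ for $i=1,2$. Consider the flip involution $\sigma(v)=-v$ on $T^1S$. It satisfies $\sigma\circ X^t=X^{-t}\circ\sigma$, hence conjugates $X^t$ to its time reversal and preserves entropy; by uniqueness of the measure of maximal entropy (Bowen--Margulis measure $\mu_0$), we have $\sigma_*\mu_0=\mu_0$. Differentiating $\sigma X^t=X^{-t}\sigma$ at $t=0$ gives $d\sigma\cdot X=-X\circ\sigma$, so for any closed 1-form $\omega$ one has $\sigma^*(\omega(X))=-(\sigma^*\omega)(X)$ and
\begin{equation*}
\int_{T^1S}\omega(X)\,d\mu_0=-\int_{T^1S}(\sigma^*\omega)(X)\,d\mu_0.
\end{equation*}
Since $\pi\circ\sigma=\pi$ for the bundle projection $\pi\colon T^1S\to S$ and, because $S$ is not a torus, $\pi^*\colon H^1(S,\R)\to H^1(T^1S,\R)$ is an isomorphism, $\sigma^*$ acts as the identity on $H^1(T^1S,\R)$ and $[\sigma^*\omega]=[\omega]$. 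Hence both integrals above depend only on $[\omega]$ and are equal, so they vanish. Thermodynamic formalism characterizes $\xi_X$ as the unique minimizer of the convex functional $\beta$, equivalently by the first-order condition $\int\omega(X)\,d\mu_{\xi_X(X)}=0$ for every closed $\omega$. The computation above shows $\mu_0$ satisfies this condition at the class $0$, so $\xi_{X_i}=0$ for $i=1,2$.

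With $\xi_{X_1}=0=H_0^*\xi_{X_2}$ and matching of homologically trivial periods in hand, Theorem~\ref{rigidityforfullflows} produces a conjugacy between $X_1^t$ and $X_2^t$. Conjugate flows have identical period spectra on all corresponding periodic orbits, so $\ell_{g_1}(\alpha)=\ell_{g_2}(\alpha)$ for \emph{every} free homotopy class $\alpha$ on $S$. The Croke--Otal rigidity theorem then produces the desired isometry between $g_1$ and $g_2$.

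The main obstacle is the identification $\xi_{X_i}=0$ via the flip symmetry; it requires assembling both the $\sigma$-invariance of the Bowen--Margulis measure and the triviality of $\sigma^*$ on $H^1(T^1S,\R)$. The remaining steps are either classical (existence of the homotopy-class-preserving orbit equivalence and Croke--Otal) or formal consequences of Theorem~\ref{rigidityforfullflows}.
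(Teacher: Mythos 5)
Your proposal is correct and follows essentially the same route as the paper: reduce to Croke--Otal via Theorem~\ref{rigidityforfullflows}, with the key point being that the flip involution $v\mapsto -v$ preserves the measure of maximal entropy and kills $\int\omega(X)\,d\mu_0$ for every closed $1$-form, forcing $\xi_{X_i}=0$. The only cosmetic difference is how you conclude $\xi_{X_i}=0$ from this vanishing (you invoke the first-order characterization of Sharp's minimizer via convexity of $\beta$, whereas the paper runs a direct entropy comparison showing $\mu_{\xi_i}$ equals the measure of maximal entropy); both are valid and rest on the same facts.
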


In fact the following more general result holds true.
\begin{add}
Fix a homology class $c\in H_1(S,Z)$. If instead we assume that $\ell_{g_1}(\alpha)=\ell_{g_2}(\alpha)$ for every free homotopy class of loops in homology class $c$ then $g_1$ and $g_2$ are isometric. 
\end{add}

\begin{remark} A precursor for the idea of considering a fixed homology class can be found in~\cite[Theorem 3]{K2}, where  Katok proved that marked length spectrum in a fixed homology class $c$ determines the negatively curved metric on the surface in a fixed conformal class.
\end{remark}

We proceed to prove Theorem~\ref{thm_sharpenedMLS} below. The addendum can be reduced to Theorem~\ref{thm_sharpenedMLS} in the following way. The length of a homologically trivial geodesic can be arbitrarily well approximated by the difference of lengths of two geodesics in homology class $c$. This approximation can be done in the same way as in the proof of Lemma~\ref{lemma_dense}. Hence the length of such homologically trivial geodesic can be recovered from the lengths of geodesics in $c$. Further, the approximation procedure persists under orbit equivalence of geodesic flows and, hence, marked length spectrum in $c$ recovers the homologically trivial length spectrum.

\begin{lemma}\label{vanishingMME} Let $X^t\colon T^1S\to T^1S$ be the geodesic flow on a negatively curved surface and let $I\colon T^1S\to T^1S$ be the involution given by $v\mapsto -v$. Assume that $\mu$ is an $X^t$-invariant measure such that $I_*\mu=\mu$. Then  
 $$
 \int \omega(X)d\mu=0
 $$ 
 for every closed $1-$form  $\omega$. In particular, this holds for the measure of maximal entropy.
\end{lemma}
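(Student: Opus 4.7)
The plan is to exploit the anti-symmetry $I\circ X^t=X^{-t}\circ I$, which differentiates at $t=0$ to $DI_v X(v)=-X(I(v))$. This gives, for any smooth 1-form $\omega$ on $T^1S$,
\[
(I^*\omega)(X)=-\omega(X)\circ I,
\]
and integrating against $\mu$, using $I_*\mu=\mu$, produces
\[
\int (I^*\omega)(X)\,d\mu=-\int \omega(X)\,d\mu.
\]

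The next step is to show that $I^*$ acts trivially on $H^1(T^1S,\mathbb{R})$. Since $S$ is negatively curved it has genus $g\ge 2$, and the Gysin sequence for the circle bundle $\pi\colon T^1S\to S$ (whose Euler number $2-2g$ is nonzero) yields that $\pi^*\colon H^1(S,\mathbb{R})\to H^1(T^1S,\mathbb{R})$ is an isomorphism. Combined with $\pi\circ I=\pi$, this forces $I^*[\omega]=[\omega]$, so $I^*\omega=\omega+df$ for some smooth $f\colon T^1S\to\mathbb{R}$. Plugging this into the previous display and using $\int L_X f\,d\mu=0$ (by $X^t$-invariance of $\mu$), one obtains $\int \omega(X)\,d\mu=-\int \omega(X)\,d\mu$, hence $\int \omega(X)\,d\mu=0$.

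For the ``in particular'' clause, let $\mu$ be the Bowen--Margulis measure of maximal entropy. Because $I$ conjugates $X^t$ with $X^{-t}$, and these two flows share the same invariant measures and the same entropy function, $I_*$ sends MMEs to MMEs; uniqueness of the MME for the geodesic flow in negative curvature then forces $I_*\mu=\mu$, and the general statement applies. The only slightly delicate step is the cohomological reduction $I^*\omega=\omega+df$; everything else is a short calculation. Negative curvature enters precisely there, since the analogous isomorphism $\pi^*\colon H^1(S,\mathbb{R})\to H^1(T^1S,\mathbb{R})$ fails exactly for $S=\mathbb T^2$, consistent with the remark in Section~2 that the flat torus is homologically ample but not homologically full.
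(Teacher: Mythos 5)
Your proof is correct and follows essentially the same route as the paper: both arguments rest on the anti-equivariance $DI(X)=-X$, the triviality of $I^*$ on $H^1(T^1S,\R)$ via the bundle projection, and the change of variables $I_*\mu=\mu$ to conclude $\int\omega(X)\,d\mu=-\int\omega(X)\,d\mu$. The only cosmetic difference is that you justify $I^*[\omega]=[\omega]$ via the Gysin sequence and absorb the discrepancy into an exact term $df$, whereas the paper simply chooses a pullback representative $\omega=\pi^*\eta$ and computes pointwise.
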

\begin{proof} Note that $I(x,v)=(x,-v)$ conjugates the geodesic flow and it's inverse and interchanges the stable and unstable foliations. We have $X^{-t}=IX^tI$ or, infinitesimally, $DI(X)=-X$. Hence, if $\mu$ is the measure of maximal entropy for $X^t$ then $I_*\mu$ is the measure of maximal entropy for $X^{-t}$ and, hence, indeed, $I_*\mu=\mu$. 

Recall that the bundle map $T^1S\to S$ induces an isomorphism on cohomology. Hence we can and do assume that $\omega$ is a pullback of a $1-$form on the surface. It is easy to see that for such forms we have $\omega_{(x,-v)}(X(x,-v))=-\omega_{(x,v)}(X(x,v))$.
Now, for any $\mu$ such that $I_*\mu=\mu$, the claim of the lemma comes from the following calculation.
\begin{multline*}
 \int \omega(X)d\mu=	 \int \omega(X)dI_*\mu=\int\omega_{(x,-v)}(X(x,-v))d\mu(x,v)\\
 =\int-\omega_{(x,v)}(X(x,v))d\mu(x,v)=-\int\omega(X)d\mu.
\end{multline*}
	
\end{proof}

\begin{proof}[Proof of Theorem~\ref{thm_sharpenedMLS}]
Denote by $X_1$ and $X_2$ the generating vector fields of geodesic flows on $T^1S$ corresponding to $g_1$ and $g_2$, respectively. Then, it is well-known that there is exists  $H\colon T^1S\to T^1S$, an orbit equivalence between $X_1^t$ and $X_2^t$, which is homotopic to identity. 

Recall the definition of $\beta$-functional
$$
\beta([\theta])=P(\theta(X))=\sup_\mu\left\{h_\mu(X)+\int\theta(X)d\mu\right\}
$$
where the supremum is taken among all $X^t-$invariant probability measures.

Denote by $\xi_i$, $i=1,2$, the Sharp's minimizers of the $\beta$ functional for $X_i$. Let $\mu_{\xi_i}$ be the equilibrium measures for $\xi_i(X)$ and also denote by $\mu_i$, $i=1,2$, the measures of maximal entropy for $X_i$. 

Then, using~(\ref{eq_form_zero}) which gives $\int\xi_i(X_i)d\mu_{\xi_i}=0$ and Lemma~\ref{vanishingMME}, we have
$$
\beta(\xi_i)=h_{\mu_{\xi_i}}(X_i)+\int\xi_i(X_i)d\mu_{\xi_i}=h_{\mu_{\xi_i}}(X_i)\le h_{\mu_{i}}(X_i)=h_{\mu_{i}}(X_i)+\int\xi_i(X_i)d\mu_{i}
$$
Hence, by the definition of $\beta$, we have $\mu_{\xi_i}=\mu_i$ and, by uniqueness of Sharp's minimizer, $\xi_i=0$, $i=1,2$. Then, obviously, $H^*\xi_2=0=\xi_1$ and we can apply Theorem~\ref{rigidityforfullflows} to conclude that $X_1^t$ and $X_2^t$ are conjugate. Hence, we have complete matching of marked length spectra and, by Croke-Otal rigidity theorem, $g_1$ is isometric to $g_2$.
\end{proof}

\begin{remark}
Recent results of Guillarmou and Lefeuvre on local marked length spectrum rigidity~\cite[Theorem 1]{GL} for higher dimensional negatively curved manifolds can be enhanced in the same way --- one only needs to assume that homologically trivial marked length spectra coincide. 
\end{remark}


\section{Conjugacy for contact Anosov flows}\label{contact}

	Recall that a (positive) {\it contact form} on an oriented $(2k+1)$-dimensional manifold $M$ is a smooth $1-$form $\alpha$ such that $\alpha\wedge (d\alpha)^k>0$. Associated to the contact form is its {\it Reeb vector field}  $X_\alpha$ which is uniquely determined by $\alpha(X_\alpha)=1$ and $\cL_{X_\alpha}\alpha=0$ (the latter is equivalent to $\iota_{X_\alpha}d\alpha=0)$. Call an Anosov flow $X^t$ a {\it contact Anosov flow} if $X$ is the Reeb vector field for a contact form $\alpha$.


\begin{theorem}
\label{thm10.3}
	Let $X_i$, $i=1,2$, be contact Anosov flows. Assume one of the following
	\begin{enumerate}
	\item $X_i$ are flows on a 3-dimensional manifold, which are orbit equivalent via a $C^1$ orbit equivalence;
	\item $X_i$ are Anosov geodesic flows with $C^1$ Anosov splittings, which are orbit equivalent via a $C^2$ orbit equivalence;
	\end{enumerate}
	Then there exist a closed $1-$form $\omega$ and a constant $c>0$ such that $X_1$ is smoothly conjugated to $\frac{X_2}{c+\omega(X_2)}$. If, moreover, the orbit equivalence matches Sharp's minimizers $\xi_{X_1}\in H^1(M,\R)$ to $\xi_{X_2}\in H^1(M,\R)$, then $\omega$ can be taken to be $0$, that is, $X_1$ is conjugate to a constant rescaling of $X_2$. 
\end{theorem}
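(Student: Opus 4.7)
The plan is to find a positive constant $c>0$ so that, upon replacing $X_2$ with its constant rescaling $\tilde X_2 = X_2/c$, the orbit equivalence $H_0$ matches homologically trivial period spectra of $X_1$ and $\tilde X_2$. Since contact Anosov flows are homologically full by Theorem~\ref{full}, Theorem~\ref{rigidityforfullflows} then produces a conjugacy of $X_1^t$ with the reparametrization generated by $\tilde X_2/(1+\tilde\omega(\tilde X_2))$ for some closed $1$-form $\tilde\omega$, which upon undoing the rescaling becomes $X_2/(c+\omega(X_2))$ with $\omega=\tilde\omega$. For the last claim, if $H_0^*\xi_{X_2} = \xi_{X_1}$, the second assertion of Theorem~\ref{rigidityforfullflows} forces $\omega$ to be exact, and absorbing the exact part into the conjugating time change via Lemma~\ref{lemma_repar} allows us to take $\omega=0$.

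The constant $c$ is produced by comparing the two $2$-forms $d\alpha_1$ and $H_0^* d\alpha_2$ on $M$. Writing $Z = (H_0)_* X_1 = \ell X_2$, a direct computation using the Reeb condition shows that both $2$-forms have kernel $\R X_1$ and are $X_1$-invariant: indeed $L_{X_1}(H_0^* d\alpha_2) = H_0^*(L_{\ell X_2} d\alpha_2) = 0$ since $\iota_{X_2} d\alpha_2 = 0$. In Case~1, the quotient $TM/\R X_1$ is two-dimensional, so $d\alpha_1 = f\cdot H_0^* d\alpha_2$ for some nowhere-zero function $f$, which must be $X_1$-invariant and hence a positive constant $c$ by transitivity of $X_1$. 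In Case~2, the $C^1$ Anosov splittings together with the $C^2$-regularity of $H_0$ permit $DH_0$ to align the contact distributions $\ker\alpha_i = E_i^s\oplus E_i^u$, and the same invariance-plus-transitivity argument, now applied to the induced symplectic forms on the $\ker\alpha_i$, yields the analogous constant proportionality.

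Given $d\alpha_1 = c\cdot H_0^* d\alpha_2$, Stokes' theorem applied to any bounding $2$-chain $\Sigma$ with $\partial\Sigma=\gamma$ for a homologically trivial periodic orbit $\gamma$ of $X_1$ gives
$$
T_{X_1}(\gamma) \;=\; \int_\gamma\alpha_1 \;=\; \int_\Sigma d\alpha_1 \;=\; c\int_\Sigma H_0^* d\alpha_2 \;=\; c\int_{H_0(\gamma)}\alpha_2 \;=\; c\cdot T_{X_2}(H_0(\gamma)),
$$
which is precisely the period-matching hypothesis of Theorem~\ref{rigidityforfullflows} for $X_1$ and $\tilde X_2 = X_2/c$. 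The smoothness of the resulting conjugacy follows by applying the smooth Livshits theory of \cite[Appendix A]{LMM} together with Journ\'e's lemma~\cite{journe} to the cocycle equation $L_{X_2}u = 1/\ell - 1 - \omega(X_2)$ furnished by Lemma~\ref{lemma_repar}, whose right-hand side inherits the regularity of $\ell$.

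The main obstacle I expect is the proportionality $d\alpha_1 = c\cdot H_0^* d\alpha_2$ in Case~2, where the dimensional shortcut of Case~1 is unavailable and alignment of the contact distributions must be deduced from the $C^1$ Anosov splittings and the $C^2$-regularity of $H_0$. A secondary concern is the regularity bootstrap to a \emph{smooth} conjugacy in Case~1, where $H_0$ is only $C^1$ so $\ell$ is merely continuous: genuine additional input, likely from the rigidity of contact structures under H\"older conjugacy, is needed beyond the standard smooth Livshits theory.
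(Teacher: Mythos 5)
Your overall architecture coincides with the paper's: prove that $d\alpha_1$ and $H_0^*d\alpha_2$ are proportional by a positive constant $c$, deduce via Stokes that the homologically trivial period spectra match after rescaling by $c$, and then invoke Theorem~\ref{full} together with Theorem~\ref{rigidityforfullflows}. Your Case~1 argument (kernel of both $2$-forms is $\R X_1$, so on the $2$-dimensional quotient they differ by a flow-invariant continuous function, which is constant by transitivity) is a legitimate variant of the paper's argument, which instead compares the invariant volume forms $\beta\wedge d\beta$ and $\phi H_*(\alpha\wedge d\alpha)$ and uses ergodicity before contracting with $X_2$; in dimension $3$ these are essentially the same observation. (Your Lie-derivative computation $L_{X_1}(H_0^*d\alpha_2)=0$ should be replaced by the flow-map identity $(X_1^t)^*H_0^*d\alpha_2=H_0^*(Z^t)^*d\alpha_2=H_0^*d\alpha_2$, since $H_0^*d\alpha_2$ is only continuous when $H_0$ is $C^1$; the identity $(Z^t)^*d\alpha_2=d\alpha_2$ for the reparametrized flow $Z^t$ uses $\iota_{X_2}d\alpha_2=0$.) Your concern about upgrading the conjugacy to smooth in Case~1 is also well placed: the paper does not get this from Livshits theory but from the separate Addendum, citing Feldman--Ornstein and de la Llave--Moriy\'on.

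The genuine gap is Case~2, exactly where you anticipated it. Aligning the contact distributions $\ker\alpha_i=E_i^s\oplus E_i^u$ and running an ``invariance plus transitivity'' argument on the induced symplectic forms cannot work as stated: transitivity only promotes a \emph{pointwise} proportionality $d\alpha_1=f\cdot H_0^*d\alpha_2$ to a constant one, and in rank $2k\ge 4$ two nondegenerate invariant $2$-forms on the same distribution need not be pointwise proportional at all --- that is a genuine rigidity statement, not linear algebra. The paper fills this hole as follows: the $C^2$ regularity of $H_0$ gives that $H_*d\alpha=d(H_*\alpha)$ is \emph{exact}, the computation $0=L_{X_1}d\alpha=\phi L_{X_2}(H_*d\alpha)+d\phi\wedge\iota_{X_2}H_*d\alpha$ together with $\iota_{X_2}H_*d\alpha=0$ gives that it is $X_2$-invariant, and then Hamenst\"adt's theorem \cite[Theorem A3]{Ham} (which is where the hypotheses ``geodesic flow'' and ``$C^1$ Anosov splitting'' are actually used) asserts that the space of exact $X_2$-invariant $2$-forms is one-dimensional, spanned by $d\beta$. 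Without this external input, or a substitute for it, your Case~2 does not close. A secondary inaccuracy: the cohomological equation $L_{X_2}u=1/\ell-1-\omega(X_2)$ has a right-hand side that is only $C^0$ (Case~1) or $C^1$ (Case~2), so the smooth Livshits theory of \cite{LMM,journe} does not by itself yield a smooth conjugacy; the paper's smoothness claims rest on the Hamenst\"adt/Feldman--Ornstein/de la Llave--Moriy\'on circle of results rather than on regularity of $\ell$.
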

Recall that geodesic flows on perturbations of hyperbolic manifolds are $\frac12$-pinched and hence have $C^1$ Anosov splittings. 

\begin{add} In the first case when $X_i$ are 3-dimensional flows the $C^0$ conjugacy is in fact smooth.
\end{add}

The addendum follows from work of Fledman-Ornstein~\cite{FO} who proved that a $C^0$ conjugacy must be $C^1$ and the bootstrap argument of de la Llave-Moriy\'on~\cite{dlLM}.

\begin{proof}[Proof of Theorem~\ref{thm10.3}]
Let $\alpha$ be the contact form for $X_1$ and let $\beta$ be the contact form for $X_2$. Denote by $H$ the orbit equivalence so that $H_*X_1=\phi X_2$ for some positive $\phi\in C^0$.
\begin{lemma}
There exists a constant $c>0$ such that the 1-form $\mu=c\beta-H_*\alpha$ is closed.
\end{lemma}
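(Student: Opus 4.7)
The plan is to show that $d(H_*\alpha)=c\cdot d\beta$ for some positive constant $c$, which immediately yields $d\mu=0$. The argument rests on two preliminary identities: $\iota_{X_2}d\beta=0$ (the Reeb property for $\beta$) and $\iota_{X_2}d(H_*\alpha)=0$. To obtain the second, one pushes the Reeb equation $L_{X_1}\alpha=0$ forward via $H$, giving $L_{\phi X_2}(H_*\alpha)=0$. Expanding this with $L_{fY}=fL_Y+df\wedge\iota_Y$, the identity $\iota_{X_2}(H_*\alpha)=1/\phi$ (from $\alpha(X_1)=1$ and $H_*X_1=\phi X_2$), and Cartan's formula, one deduces $\iota_{X_2}d(H_*\alpha)=0$.

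In case (1), $M$ is three-dimensional, so at each point the space of 2-forms annihilated by $\iota_{X_2}$ is one-dimensional and spanned by $d\beta$. Hence there is a function $f$ with $d(H_*\alpha)=f\cdot d\beta$. Applying $d$ yields $df\wedge d\beta=0$, and expanding in a local coframe dual to $\mathbb{R}X_2\oplus E^s_2\oplus E^u_2$ shows that this forces $df(X_2)=0$, so $f$ is constant along $X_2$-orbits. Topological transitivity of the Anosov flow $X_2$ then makes $f$ a global constant $c$. For positivity, observe $H_*\alpha\wedge d\beta=(H_*\alpha)(X_2)\cdot\beta\wedge d\beta=(1/\phi)\,\beta\wedge d\beta$, so $H_*(\alpha\wedge d\alpha)=H_*\alpha\wedge c\,d\beta=(c/\phi)\,\beta\wedge d\beta$; since the left-hand side and $(1/\phi)\,\beta\wedge d\beta$ are both positive volume forms (for a suitably oriented $H$), necessarily $c>0$.

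For case (2), the manifold is higher-dimensional and the subbundle of 2-forms annihilated by $\iota_{X_2}$ is no longer of rank one. The additional geometric input is that for geodesic flows $E^s$ and $E^u$ are Lagrangian inside the contact hyperplane $\ker\alpha$ with respect to $d\alpha$; the $C^1$ hypothesis on the Anosov splittings together with the $C^2$ regularity of $H$ allows one to arrange (after a cocycle adjustment in the $X_2$ direction) that $H_*$ carries the Lagrangian pair $(E^s_1,E^u_1)$ onto $(E^s_2,E^u_2)$. With this matching in place, the pointwise endomorphism $A=(d\beta)^{-1}\circ d(H_*\alpha)$ of $\ker\beta$ preserves the splitting $E^s_2\oplus E^u_2$ and is $X_2$-invariant; transitivity combined with the Lagrangian-pair rigidity forces $A$ to be a constant scalar multiple of the identity, yielding $d(H_*\alpha)=c\,d\beta$ with $c>0$ by the same volume-form comparison as above.

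The principal obstacle is regularity: as stated, $H$ is only $C^1$ in case (1) or $C^2$ in case (2), so $H_*\alpha$ is merely $C^0$ or $C^1$ and $d(H_*\alpha)$ exists initially only as a distribution. This is handled either by first upgrading $H$ via the bootstrap arguments of Feldman--Ornstein and de la Llave--Moriy\'on (already invoked elsewhere in the proof of the theorem), or by reading the pointwise identity $d(H_*\alpha)=f\,d\beta$ in the sense of currents and verifying continuity of $f$ before applying transitivity. A secondary obstacle, specific to case (2), is justifying the matching of strong stable and unstable distributions under $H_*$ up to flow adjustment; this is precisely where the $C^1$ Anosov splitting and $C^2$ orbit equivalence hypotheses are essential.
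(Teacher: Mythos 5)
Your argument takes a genuinely different route from the paper's, and in both cases it has gaps that are not repaired by the remarks at the end. In case (1) the central problem is regularity, and you have correctly identified it but the proposed fixes do not work. Your argument differentiates $H_*\alpha$ twice: once to form $d(H_*\alpha)$ and extract the pointwise factor $f$ with $d(H_*\alpha)=f\,d\beta$, and once more to get $df\wedge d\beta=0$; with $H$ only $C^1$, the form $H_*\alpha$ is merely continuous, $f$ is at best a distribution, and $df$ is not defined. The suggested repair via Feldman--Ornstein and de la Llave--Moriy\'on is circular: those results upgrade the regularity of a \emph{conjugacy}, which is precisely what the theorem is trying to produce and which exists only after this lemma is proved; they say nothing about the given orbit equivalence $H$, which cannot in general be made smooth. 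The paper's case (1) argument is built to avoid taking any derivative of $H_*\alpha$: it observes that $\phi H_*(\alpha\wedge d\alpha)$ and $\beta\wedge d\beta$ are both $X_2$-invariant continuous volume forms (the first is a pushforward of the \emph{smooth} form $\alpha\wedge d\alpha$, so no differentiation of $H$ is needed), concludes by ergodicity that $\phi H_*(\alpha\wedge d\alpha)=c\,\beta\wedge d\beta$ with $c>0$, and then contracts both sides with $X_2$, using $\iota_{X_1}(\alpha\wedge d\alpha)=d\alpha$ and $\iota_{X_2}(\beta\wedge d\beta)=d\beta$, to obtain $H_*d\alpha=c\,d\beta$ as an identity between continuous $2$-forms. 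Your volume-form comparison appears only at the end, to fix the sign of $c$; in the paper it is the engine of the whole proof.

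In case (2) you are in effect attempting to reprove the result the paper cites, namely Hamenst\"adt's Theorem A3, which states that two exact $X_2$-invariant $2$-forms annihilated by $\iota_{X_2}$ are proportional by a constant (under the $C^1$ splitting hypothesis for geodesic flows). Two steps of your sketch are asserted rather than proved and are exactly where the difficulty lies: first, that $H_*$ can be adjusted ``by a cocycle in the flow direction'' to carry the strong splitting $(E^s_1,E^u_1)$ onto $(E^s_2,E^u_2)$ --- an orbit equivalence matches weak foliations, not strong ones, and no such adjustment is available in general; second, that ``Lagrangian-pair rigidity'' forces the invariant endomorphism $A$ to be a constant scalar rather than, say, a different constant on $E^s_2$ and on $E^u_2$. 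You also drop the exactness of $d(H_*\alpha)$, which the paper establishes from the $C^2$ hypothesis on $H$ (via $H_*d\alpha=dH_*\alpha$) precisely because it is a hypothesis of Hamenst\"adt's theorem. Separately, the derivation of $X_2$-invariance of $H_*d\alpha$ that the paper gives (and that you essentially reproduce) uses $\iota_{X_2}H_*d\alpha=0$ together with $0=L_{\phi X_2}H_*d\alpha=\phi L_{X_2}(H_*d\alpha)+d\phi\wedge\iota_{X_2}H_*d\alpha$; this is fine in case (2) where $\phi\in C^1$, but note it is applied to $H_*d\alpha$, not to $H_*\alpha$ as in your version.
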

\begin{proof}
First we prove the lemma when $X_i$ are Anosov geodesic flows with $C^1$ Anosov splitting and $H$ is $C^2$. Because of the $C^2$ hypotheis we have that $\phi\in C^1$ and $H_*d\alpha=dH_*\alpha$ is exact. We claim that it is also $X_2$-invariant. Using functoriality, we have
$$
0=L_{X_1}d\alpha=L_{\phi X_2}H_*d\alpha=\phi L_{X_2}(H_*d\alpha)+d\phi\wedge \iota_{X_2} H_*d\alpha
$$
Notice that $0=\iota_{X_1}d\alpha=\iota_{\phi X_2}H_*d\alpha=\phi\iota_{X_2} H_*d\alpha$. Hence, indeed, we have $\phi L_{X_2}(H_*d\alpha)=0$.

Therefore, both $d\beta$ and $H_*d\alpha$ are exact $X_2$-invariant 2-forms. Then by~\cite[Theorem A3]{Ham} there is a constant $c>0$ such that $H_*d\alpha=c d\beta$ and lemma follows. (Constant $c$ is positive because both $\alpha$ and $\beta$ are positive contact forms and $\phi>0$.)

In the 3-dimensional case when $H$ is merely $C^1$ (and, hence, we do not know that $H_*d\alpha$ is exact anymore) we can actually make a direct argument. We have that $H_*(\alpha\wedge d\alpha)$ is a $\phi X_2$-invariant $C^0$ volume form. Hence, both $\beta\wedge d\beta$ and $\phi H_*(\alpha\wedge d\alpha)$  are $X_2$-invariant volume forms. Then, by ergodicity, 
$$
\phi H_*(\alpha\wedge d\alpha)=c \beta\wedge d\beta
$$
where $c>0$, again, because both $\alpha$ and $\beta$ are positive. 

Now note that $\iota(\beta\wedge d\beta)=d\beta$ and $\iota(\alpha\wedge d\alpha)=d\alpha$. We calculate
\begin{eqnarray*}cd\beta&=&c \iota_{X_2}(\beta\wedge d\beta)=\frac{c}{\phi}\iota_{H_*X_1}(\beta\wedge d\beta)
	=\iota_{H_*X_1}\left(\frac{c}{\phi}\beta\wedge d\beta\right)\\
	&=&\iota_{H_*X_1}\left(H_*(\alpha\wedge d\alpha)\right)=H_*\iota_{X_1}\left(\alpha\wedge d\alpha\right)\\
	&=&H_*d\alpha
\end{eqnarray*}
\end{proof}

Consider any homologically trivial $X_1$-periodic orbit $\gamma$. Then $H_*\gamma$ bounds a surface $S$ and we have
\begin{multline*}
c\,\textup{per}_{X_2}(H_*\gamma)-\textup{per}_{X_1}(\gamma)=c\int_{H_*\gamma}\beta-\int_\gamma\alpha=\int_{H_*\gamma}c\beta-\int_{H_*\gamma}{H_*\alpha}\\
=\int_{H_*\gamma}\mu=\int_Sd\mu=0
\end{multline*}
Hence, after rescaling by $c$, the marked length spectra for homologically trivial orbits match. Hence the result follows immediately from Theorem~\ref{full} and Theorem~\ref{rigidityforfullflows}.
\end{proof}

\begin{remark}
For non-homotopically trivial periodic orbits $\gamma$ the above calculation gives that  the periods $\textup{per}_{X_1}(\gamma)$ and $\textup{per}_{X_2}(H_*\gamma)$ are related as follows
$$
\textup{per}_{X_1}(\gamma)= c\,\textup{per}_{X_2}(H_*\gamma)+[\mu]([H_*\gamma])
$$
Note that $[H_*\gamma]=[\gamma]$ if $H$ is homotopic to identity.
\end{remark}

\section{An example}

We begin by pointing out that the scenario of Theorem~\ref{thm10.3} actually occurs. Indeed, given a contact Anosov flow $X$ with a contact form $\beta$ and a closed 1-from $\omega$ with $\omega(X)>-1$, then the reparametrization
$$
X_\omega=\frac{X}{1+\omega(X)}
$$
is a Anosov by Proposition~\ref{prop3.2} and contact with contact form $\beta+\omega$. Note that Remark~6.4 implies $X^t$ is not conjugate to $X_\omega^t$ if $X^t$ admits a periodic orbit on which $[\omega]$ does not vanish. In particular, if $X^t$ admits at least one homologically non-trivial (in $H_1(M,\R)$ \ie non-torsion) periodic orbit then there exists a small cohomology class $[\omega]$ such that $X^t$ is not conjugate it $X^t_\omega$.

Hence we see that indeed, unlike in Theorem~\ref{thm_sharpenedMLS}, matching of homologically trivial length spectra does not imply conjugacy for contact Anosov flows. This observation shows that that the conclusion of Theorem~\ref{thm10.3} is optimal. Further by Proposition~\ref{prop_minimizer} we can find the ``best" contact reparametrization with Sharp's minimizer equal to zero.

In this section we would like to present the same example from the point of view of deforming the Deck group rather than reparametrizing. At the end we will find out that this is  exactly the same example. While there is some redundancy with what was already discussed, we consider this alternative description quite instructive and thus give a rather detailed and self-contained presentation. 

 We will describe an explicit deformation $X_\mu^t\colon M_\mu\to M_\mu$ of a flow $X^t=X_0^t$ for small $\mu\in H^1(M,\R)$, such that the length spectrum deforms according to $\mu$
$$\textup{per}_{X_\mu}(h(p))= \textup{per}_{X}(p)+\mu(\gamma)$$
where $h$ is an orbit equivalence and $\gamma$ is the homology class of closed orbit of $x$.
The result of the construction will be summarized below as Proposition~\ref{last_prop}. In the context of geodesic flows on surfaces of constant negative curvature the same example was given by Ghys~\cite[Theorem 2.2]{G}. He was interested in examples of Anosov flows which are not conjugate to algebraic flows and have analytic stable and unstable distributions. We give a different, more general construction.

We proceed with the description of the example. Let $X^t\colon M\to M$ be a flow. We lift the flow to the universal abelian cover $\tilde X^t\colon\tilde M\to\tilde M$. 
Then $\Gamma_0\simeq H_1(M,\Z)$ is the group of Deck transformations acting by isometries on $\tilde M$. Note that $\gamma\in\Gamma_0$ commutes with $\tilde X^t$. 

Take a $\mu\in H^1(M,\R)\simeq \textup{Hom}(H_1(M,\Z),\R)$. Let
$$
\Gamma_\mu=\{\gamma\circ\tilde X^{\mu(\gamma)}:\gamma\in\Gamma_0\}
$$
It is easy to see now that $I_\mu\colon \Gamma_0\to\Gamma_\mu=Im(I_\mu)$ given by 
$$
I_\mu(\gamma)=\gamma\circ\tilde X^{\mu(\gamma)}
$$
is a group homomorphism (which is one-to-one when $X^t$ is not a periodic flow).

Then $\Gamma_\mu$ acts on $\tilde M$ by $\gamma\colon x\mapsto\gamma(\tilde X^{\mu(\gamma)}(x))$. First we will see that for all sufficiently small $\mu$ the orbit space $M_\mu=\tilde M/\Gamma_\mu$ is a smooth manifold diffeomorphic to $M$.

Recall that the isomorphism $H^1(M,\R)\simeq \textup{Hom}(H_1(M,\Z),\R)$ arises as follows. Let $\omega$ be a closed 1-form with cohomology class $[\omega]=\mu$. Then the lift $\tilde\omega$ of $\omega$ to $\tilde M$ is exact, that is, there exists a function $\alpha\colon\tilde M\to\R$ such that $\tilde \omega=d\alpha$. Then the homomorphism $\mu\colon H_1(M,\Z)\to \R$ is given by
$$
\mu(\gamma)=\int_x^{\gamma(x)}\tilde \omega=\alpha(\gamma(x))-\alpha(x)
$$
for any $x\in \tilde M$.

Now define $H\colon \tilde M\to\tilde M$ by $H(x)=\tilde X^{\alpha(x)}(x)$. Clearly $H$ sends orbits of $\tilde X^t$ to themselves. If
\begin{equation}
\label{alpha}
L_{\tilde X}\alpha>-1
\end{equation}
then $H$ is invertible on every orbit and, hence, is a smooth diffeomorphism. Further $H$ intertwines actions of $\Gamma_0$ and $\Gamma_\mu$
$$\forall \gamma\in\Gamma_0 \,\,\,\, H\circ \gamma=I_{\mu}(\gamma)\circ H$$ 
Indeed,
\begin{multline*}
 X^{\alpha(\gamma(x))}(\gamma(x))=X^{\alpha(x)+\mu(\gamma)}(\gamma(x))\\
 =X^{\alpha(x)}(X^{\mu(\gamma)}(\gamma(x)))=\gamma\circ X^{\mu(\gamma)}(X^{\alpha(x)}(x))=I_\mu(\gamma)\circ H
 \end{multline*}
 Hence, under condition~(\ref{alpha}), $H$ induces a diffeomorphism $h\colon M\to M_\mu$. Moreover, if we denote by $X_\mu^t\colon M_\mu\to M_\mu$ the flow induced by $\tilde X^t$ then $h$ is an orbit equivalence between $X^t$ and $X_\mu^t$.

\begin{remark} Note that $X_\mu=(L_X\alpha+1)Dh(X)$. Hence by Proposition~\ref{prop3.2}, under the condition~(\ref{alpha}), if $X$ is Anosov then so is $X_\mu$. Note also that if $X$ is contact then so is $X_\mu$ because the action of $\Gamma_\mu$ preserves the contact form. In fact, if $\beta$ is the contact form for $X^t$ then $\beta+\mu$ is the contact form for $h^{-1}\circ X_\mu^t\circ h$.
\end{remark}

Let $x\in M$ be a periodic point of period $\textup{per}_X(x)$ and let $\tilde x\in\tilde M$ be a lift of $x$. Then $\tilde X^{\textup{per}_X(x)}=\gamma(\tilde x)$, where $\gamma$ is the homology class of the orbit of $x$. Then
$$
\tilde X^{\textup{per}_X(x)+\mu(\gamma)}=\tilde X^{\mu(\gamma)}(\gamma(\tilde x))=I_\mu(\gamma)(\tilde x)
$$
and hence
\begin{equation}
\label{eq_per}
\textup{per}_{X_\mu}(h(p))= \textup{per}_{X}(p)+\mu(\gamma)
\end{equation}

Finally we have the following lemma.
\begin{lemma}
\label{last_lemma}
The set
$$
\mathcal U=\{\mu\in H^1(M,\R): \exists\omega :  [\omega]=\mu, \,\|\omega\|_{C^0}<1\}
$$
is an open neighborhood of 0 in $H^1(M,\R)$. Further, if $\mu=[\omega]\in\mathcal U$ then condition~(\ref{alpha}) holds.
\end{lemma}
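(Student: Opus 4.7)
The plan is to tackle the two assertions separately; both reduce to basic finite-dimensional linear algebra combined with the definition of $\alpha$.

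For openness, I would exploit that $H^1(M,\R)$ is finite-dimensional. Fix once and for all a basis $[\omega_1],\dots,[\omega_k]$ of $H^1(M,\R)$ represented by smooth closed 1-forms $\omega_1,\dots,\omega_k$. Given $\mu_0=[\omega_0]\in\mathcal U$ with $\|\omega_0\|_{C^0}<1$, choose $\delta>0$ small enough that $\delta\sum_{i=1}^k\|\omega_i\|_{C^0}<1-\|\omega_0\|_{C^0}$. Then for any $\mu=\mu_0+\sum_i c_i[\omega_i]$ with $|c_i|<\delta$, the 1-form $\omega=\omega_0+\sum_i c_i\omega_i$ represents $\mu$ and satisfies $\|\omega\|_{C^0}<1$ by the triangle inequality; hence $\mu\in\mathcal U$. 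Thus $\mathcal U$ is open, and taking $\omega_0=0$ shows $0\in\mathcal U$.

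For the second claim, fix $\mu=[\omega]\in\mathcal U$ and let $\alpha\colon\tilde M\to\R$ be the primitive of $\tilde\omega$ used in the construction, so that $d\alpha=\tilde\omega$. Since $\alpha$ is a scalar function, $L_{\tilde X}\alpha=d\alpha(\tilde X)=\tilde\omega(\tilde X)$. Because both $\tilde\omega$ and $\tilde X$ are lifts via the covering projection $\pi\colon\tilde M\to M$, the continuous function $\tilde\omega(\tilde X)$ is precisely the pullback of $\omega(X)\colon M\to\R$. Now the condition $\|\omega\|_{C^0}<1$ (interpreted, as is consistent with Lemma~\ref{lemma_stand} and Proposition~\ref{prop_minimizer}, so as to force the pointwise bound $|\omega(X)|<1$ on $M$) immediately yields $L_{\tilde X}\alpha=\omega(X)\circ\pi>-1$ on $\tilde M$, which is exactly condition~(\ref{alpha}).

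There is no real obstacle in the argument; the only point requiring a bit of care is matching the choice of norm $\|\cdot\|_{C^0}$ on 1-forms with the pointwise bound $\omega(X)>-1$ used throughout the paper (if one prefers a fixed Riemannian $C^0$ norm, one can simply rescale or, equivalently, work with $\|\omega(X)\|_{C^0(M)}$ directly). Once that convention is fixed, both openness and the derivation of~(\ref{alpha}) are immediate from the display $L_{\tilde X}\alpha=\omega(X)\circ\pi$.
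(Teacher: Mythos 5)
Correct, and essentially the same as the paper's proof: the second claim follows in both from the identity $L_{\tilde X}\alpha=d\alpha(\tilde X)=\tilde\omega(\tilde X)=\omega(X)$ (with the same implicit convention that $\|\omega\|_{C^0}<1$ forces $|\omega(X)|<1$), and openness follows in both from fixing basis representatives of $H^1(M,\R)$ and applying the triangle inequality. The paper merely packages the openness step as a translate $\mu+\cB_\eps$ of a fixed neighborhood of $0$, which is the same computation you carry out directly.
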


We summarize all of the above discussion as follows.

\begin{proposition} Given a smooth flow $X^t\colon M\to M$ on a compact manifold $M$. There exists a open neighborhood $\mathcal U\subset H^1(M,\R)$ of zero and a deformation $X_\mu^t\colon M_\mu\to M_\mu$ such that
\label{last_prop}
\begin{enumerate}
\item $X_0^t=X^t$;
\item There exists a family of diffeomorphisms $h_\mu\colon M\to M_\mu$ which give orbit equivalences between $X^t$ and $X^t_\mu$;
\item The periods of periodic orbits deform according to~(\ref{eq_per});
\item If $X^t$ is Anosov, then all $X_\mu^t$, $\mu\in\mathcal U$ are Anosov;
\item If $X^t$ is contact, then all $X_\mu^t$, $\mu\in\mathcal U$ are contact.
\end{enumerate}
\end{proposition}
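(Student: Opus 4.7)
The plan is simply to package together the construction carried out throughout Section~8, with the neighborhood $\mathcal U$ supplied by Lemma~\ref{last_lemma}. For each $\mu\in\mathcal U$ I would choose a smooth closed representative $\omega$ of $\mu$ with $\|\omega\|_{C^0}<1$, so that the primitive $\alpha$ of the lift $\tilde\omega$ on $\tilde M$ satisfies $L_{\tilde X}\alpha = \omega(X) > -1$, i.e.\ condition~(\ref{alpha}). I would then form the twisted group $\Gamma_\mu = \{\gamma\circ\tilde X^{\mu(\gamma)} : \gamma\in\Gamma_0\}$, define $M_\mu = \tilde M/\Gamma_\mu$, and let $X_\mu^t$ be the flow induced by $\tilde X^t$, which commutes with every element of $\Gamma_\mu$.

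For the orbit equivalence, I take $h_\mu$ to be the descent of $H(x)=\tilde X^{\alpha(x)}(x)$. Under condition~(\ref{alpha}) the map $H$ is a diffeomorphism of $\tilde M$, and the intertwining identity $H\circ\gamma = I_\mu(\gamma)\circ H$ verified in Section~8 ensures that $H$ descends to a smooth orbit equivalence $h_\mu\colon M\to M_\mu$. Property~(1) comes from the degenerate choice $\omega\equiv 0$ at $\mu=0$: then $\alpha\equiv 0$, $\Gamma_\mu=\Gamma_0$, $M_\mu=M$, and $H=\mathrm{id}$. Property~(2) is exactly the orbit-equivalence statement just noted, and property~(3) is the period calculation producing~(\ref{eq_per}).

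Properties~(4) and~(5) follow from the Remark in Section~8. Pulling $X_\mu$ back via $h_\mu$ gives a smooth positive reparametrization of $X$ (the factor $L_X\alpha+1$ is positive by condition~(\ref{alpha})), so Proposition~\ref{prop3.2} yields~(4). For~(5), if $\beta$ is the contact form for $X^t$, its lift $\tilde\beta$ to $\tilde M$ is $\Gamma_0$-invariant and $\tilde X^t$-invariant (because $L_X\beta=0$), hence invariant under the action of $\Gamma_\mu$; thus $\tilde\beta$ descends to a smooth 1-form on $M_\mu$, which remains contact with Reeb vector field $X_\mu$.

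The only real subtlety I anticipate is checking independence from the choice of representative $\omega$: two representatives of $\mu$ differ by an exact form $d\psi$, producing the same twisted group $\Gamma_\mu$ (since $\mu(\gamma)$ depends only on the cohomology class) and primitives differing by $\psi$, so the resulting maps $H$ differ by composition with a smooth time change along $\tilde X^t$. The induced flows on $M_\mu$ are therefore canonically identified up to smooth conjugacy, and the five listed properties are all preserved.
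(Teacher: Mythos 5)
Your proposal is correct and is essentially the paper's own argument: the proposition is stated there as a summary of the Section~8 construction ($\Gamma_\mu$, $M_\mu=\tilde M/\Gamma_\mu$, the map $H(x)=\tilde X^{\alpha(x)}(x)$ under condition~(\ref{alpha}), the period computation~(\ref{eq_per}), the Remark handling the Anosov and contact cases, and Lemma~\ref{last_lemma} for the openness of $\mathcal U$). Your added check that the construction is independent of the representative $\omega$ is a sensible supplement but not a departure from the paper's route.
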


\begin{remark}
While Lemma~\ref{last_lemma} is very simple and elementary, the actual description of the set of admissible cohomology classes $\mu$ appears in Lemma~\ref{lemma_stand}.
\end{remark}

It remains to prove the lemma.
\begin{proof}[Proof of Lemma~\ref{last_lemma}]
If $\mu=[\omega]$ with $\|\omega\|_{C^0}<1$ then we have $L_{\tilde X}\alpha=d\alpha(\tilde X)=\tilde\omega(\tilde X)=\omega(X)$. Hence if $\mu\in\mathcal U$ then condition~(\ref{alpha}) is verified. 

Now we check that $\mathcal U$ is open. Let $[\omega_1], [\omega_2],\ldots [\omega_N]$ be a basis of $H^1(M,\R)$. By rescaling if necessary, we can assume that $\|\omega_i\|_{C^0}=1$, $i=1,\ldots N$. Then, obviously, 
the set
$$
\cB_\eps=\left\{\frac1N\left[\sum_{i=1}^Nt_i\omega_i\right]: \sum_{i=1}^Nt_i\le\eps\right\}
$$
contains an open neighborhood of 0 in $H^1(M,\R)$ and any $\mu\in\cB_\eps$ can be represented by a closed 1-form of norm $\le\eps$. 

For any $\mu\in\mathcal U$ we have $\mu=[\omega]$ with $\|\omega\|_{C^0}<1$. Let $\eps=\frac12(1-\|\omega\|_{C^0})$. Then it is easy to see that $\mu+\cB_\eps\subset \mathcal U$ proving that $\mathcal U$ is open.
\end{proof}

\end{document}